\newcounter{dummy} \numberwithin{dummy}{section}
\newtheorem{defi}[dummy]{Definition}
\newcommand{\bq}{\begin{equation}}
\newcommand{\eq}{\end{equation}}
\renewcommand{\ldots}{\dotsc}
\newtheorem{algorithm}{Weak Galerkin Algorithm}
\newcommand{\bu}{{\bf u}}
\newcommand{\bw}{{\bf w}}
\newcommand{\be}{{\bf e}}
\newcommand{\bv}{{\bf v}}
\newcommand{\bQ}{{\bf Q}}
\newcommand{\curl}{{\nabla\times}}
\newcommand{\cw}{{\nabla_w\times}}
\def\T{{\mathcal T}}
\def\E{{\mathcal E}}
\def\V{{\mathcal V}}
\def\W{{\mathcal W}}
\def\Q{{\mathbb Q}}
\def\pT{{\partial T}}
\def\l{{\langle}}
\def\r{{\rangle}}
\def\jump#1{{[\![#1[\!]}}
\def\bbf{{\bf f}}
\def\bn{{\bf n}}
\def\bq{{\bf q}}
\def\bt{{\bf t}}
\def\3bar{{|\hspace{-.02in}|\hspace{-.02in}|}}
\begin{document}

\title{A Weak Galerkin Finite Element Method for the Maxwell Equations}

\author{
Lin Mu\thanks{Department of Mathematics, Michigan State University,
      East Lansing, MI 48824 (linmu@ msu.edu)}
\and Junping Wang\thanks{Division of Mathematical Sciences, National
Science Foundation, Arlington, VA 22230, USA,
\texttt{jwang@nsf.gov}. The research of Wang was supported by the
NSF IR/D program, while working at the Foundation. However, any
opinion, finding, and conclusions or recommendations expressed in
this material are those of the author and do not necessarily reflect
the views of the National Science Foundation.} \and Xiu
Ye\thanks{Department of Mathematics, University of Arkansas at
Little Rock, Little Rock, AR 72204, USA, \texttt{xxye@ualr.edu}.
This author was supported in part by the National Science Foundation
under Grant No. DMS-1115097.} \and Shangyou Zhang\thanks{Department
of Mathematical Sciences, University of Delaware, Newark, DE 19716,
USA, (szhang@\ udel.edu)}}

\maketitle

\begin{abstract}
This paper introduces a numerical scheme for time harmonic Maxwell's
equations by using weak Galerkin (WG) finite element methods. The WG
finite element method is based on two operators: discrete weak curl
and discrete weak gradient, with appropriately defined
stabilizations that enforce a weak continuity of the approximating
functions. This WG method is highly flexible by allowing the use of
discontinuous approximating functions on arbitrary shape of
polyhedra and, at the same time, is parameter free. Optimal-order of
convergence is established for the weak Galerkin approximations in
various discrete norms which are either $H^1$-like or $L^2$ and
$L^2$-like. An effective implementation of the WG method is
developed through variable reduction by following a Schur-complement
approach, yielding a system of linear equations involving unknowns
associated with element boundaries only. Numerical results are
presented to confirm the theory of convergence.
\end{abstract}

\begin{keywords}
weak Galerkin, finite element methods, weak curl, weak gradient,
Maxwell equations, polyhedral meshes
\end{keywords}
\begin{AMS}
Primary, 65N15, 65N30, 76D07; Secondary, 35B45, 35J50
\end{AMS}

\section{Introduction}

In this paper, we are concerned with new developments of numerical
methods for the time-harmonic Maxwell equations in a heterogeneous
medium $\Omega\subset \mathbb{R}^3$. The model problem seeks unknown
functions $\bu$ and $p$ satisfying
\begin{eqnarray}
\nabla\times(\mu\nabla\times \bu)-\epsilon\nabla p &=&{\bf f}_1\quad \mbox{in}\;\Omega,\label{moment1}\\
\nabla\cdot(\epsilon\bu)&=&g_1\quad\mbox{in}\;\Omega,\label{cont1}\\
\bu\times\bn &=& \phi\quad\mbox{on}\;\partial\Omega,\label{bcc1}\\
p&=&0\quad\mbox{on}\;\partial\Omega,\label{bc1}
\end{eqnarray}
where the coefficients $\mu>0$ and $\epsilon>0$ are the magnetic
permeability and the electric permittivity of the medium,
respectively.

A weak formulation for (\ref{moment1})-(\ref{bc1}) seeks $(\bu, p)
\in H(\hbox{curl};\Omega) \times H_0^1(\Omega)$ such that
$\bu\times\bn=\phi$ on $\partial\Omega$ and
\begin{eqnarray}
(\nu\nabla\times\bu,\ \nabla\times\bv)-(\bv,\nabla p)&=&({\bf f},\  \bv),
\quad \forall \bv \in H_0(\hbox{curl};\Omega)\label{w1}\\
(\bu,\nabla q)&=&-(g,q),\quad\forall q\in H_0^1(\Omega),\label{w2}
\end{eqnarray}
where $\nu=\mu/\epsilon$, ${\bf f}={\bf f}_1/\epsilon$ and
$g=g_1/\epsilon$.

The Maxwell equations have been studied extensively in literature by
using various numerical methodologies including
$H(\hbox{curl};\Omega)$-conforming edge element approaches
\cite{boss,jin,monk, Nedelec, Nedelec1} and discontinuous Galerkin
methods \cite{bls,bcnl, hps,hps-1,ps,psm}. Particularly in
\cite{hps-2}, a mixed DG formulation for the problem
(\ref{moment1})-(\ref{bc1}) was introduced and analyzed. In this DG
formulation, both $\bu$ and $p$ are approximated by piecewise
$[P_k(T)]^3$ and $P_k(T)$ functions if $T$ is a tetrahedron and by
piecewise $[Q_k(T)]^3$ and $Q_k(T)$ if $T$ is a parallelepiped,
where $P_k(T)$ denotes the set of polynomials of total degree $k$
and $Q_k(T)$ the set of polynomials of degree $k$ in each variable.

The weak Galerkin (WG) finite element method refers to a general
finite element technique for partial differential equations where
differential operators are approximated as discrete distributions or
discrete weak derivatives. The method was first introduced in
\cite{wy, wy-mixed} for second order elliptic equations, and was
later extended to other partial differential equations including the
Stokes equations \cite{wang-ye-stokes} and the biharmonic equation
\cite{mu-wang-ye-biharmonic, cwang-jwang-biharmonic}. The current
research indicates that the concept of discrete weak differential
operators offers a new paradigm in numerical methods for partial
differential equations.

In this paper, we apply the idea of weak Galerkin to the problem
(\ref{moment1})-(\ref{bc1}). In essence, this procedure shall
introduce a discrete curl operator, which shall be combined with the
discrete weak gradient as introduced in \cite{wy} to yield a finite
element scheme for the Maxwell equations. In this WG method, two
types of weak functions are used: $\bu_h=\{\bu_0,\bu_b\}\in
[P_s(T)]^3\times [P_t(e)]^3$ and $p_h=\{p_0,p_b\}\in P_\ell(T)\times
P_\iota(e)$, with $\bu_h=\bu_0$ and $p_h=p_0$ inside of each element
and $\bu_h=\bu_b$ and $p_h=p_b$ on the boundary of the element.
Error estimates of optimal order are established for the WG
approximations in appropriate norms for the case of $s=t=k$ and
$\ell=k-1$, $\iota=k$ with $k\ge 1$. For the case of $s=t=k$ and
$\ell=\iota=k-1$, only numerical experiments are conducted to
illustrate the performance of the corresponding WG finite element
scheme; a theoretical study of this WG method is left to interested
readers for an investigation.

The use of weak functions and weak derivatives makes the WG method
highly flexible on the construction of finite element functions on
partitions with arbitrary polygons or polyhedrons. Compared with the
DG method in \cite{hps-2}, our WG methods make use of additional
variables $\bu_b$ and $p_b$ defined on the boundary of the elements.
However, the variables $\bu_0$ and $p_0$ defined on each element can
be eliminated through a local process/computation, yielding a system
of linear equations involving only the variables $\bu_b$ and $p_b$.
Consequently, the WG method has much less number of globally coupled
unknowns than DG methods. In addition, the weak Galerkin finite
element method is parameter independent in its stability and
convergence.

The paper is organized as follows. In Section
\ref{Section:preliminaries}, we introduce some basic notations. In
Section \ref{Section:wg-wd}, we discuss some discrete weak
differential operators, particularly a discrete weak curl. Section
\ref{Section:WGFEM} is devoted to a presentation of the weak
Galerkin finite element scheme for the problem
(\ref{w1})-(\ref{w2}). In Section \ref{Section:ErrorEquation}, we
derive an error equation for the WG finite element approximation. In
Section \ref{Section:L2Projections}, we introduce two types of $L^2$
projection operators and derive some estimates for them. Sections
\ref{Section:H1ErrorEstimates} and \ref{Section:L2ErrorEstimates}
are devoted to an error analysis for the WG finite element
approximations. In Section \ref{Section:Schur}, we discuss an
efficient implementation method by using variable
reductions/elimination. Finally in Section \ref{Section:NE}, we
present some numerical results that verify the theory established in
the previous sections.

\section{Preliminaries and Notations}\label{Section:preliminaries}

Let $D$ be any open bounded domain with Lipschitz continuous
boundary in $\mathbb{R}^3$. We use the standard definition for the
Sobolev space $H^s(D)$ and their associated inner products
$(\cdot,\cdot)_{s,D}$, norms $\|\cdot\|_{s,D}$, and seminorms
$|\cdot|_{s,D}$ for any $s\ge 0$. For example, for any integer $s\ge
0$, the seminorm $|\cdot|_{s, D}$ is given by
$$
|v|_{s, D} = \left( \sum_{|\alpha|=s} \int_D |\partial^\alpha v|^2
dD \right)^{\frac12}
$$
with the usual notation
$$
\alpha=(\alpha_1, \ldots, \alpha_d), \quad |\alpha| =
\alpha_1+\ldots+\alpha_d,\quad
\partial^\alpha =\prod_{j=1}^3\partial_{x_j}^{\alpha_j}.
$$
The Sobolev norm $\|\cdot\|_{m,D}$ is given by
$$
\|v\|_{m, D} = \left(\sum_{j=0}^m |v|^2_{j,D} \right)^{\frac12}.
$$

The space $H^0(D)$ coincides with $L^2(D)$, for which the norm and
the inner product are denoted by $\|\cdot \|_{D}$ and
$(\cdot,\cdot)_{D}$, respectively. When $D=\Omega$, we shall drop
the subscript $D$ in the norm and inner product notation.

The space $H(\hbox{curl};D)$ is defined as the set of vector-valued
functions on $D$ which, together with their curl, are square
integrable; i.e.,
\[
H({\rm curl}; D)=\left\{ \bv: \ \bv\in [L^2(D)]^3, \nabla\times\bv \in
[L^2(D)]^3\right\}.
\]

\section{Weak Derivatives}\label{Section:wg-wd}

The two differential operators used in (\ref{w1}) and (\ref{w2}) are
curl and gradient operators. The goal of this section is to
introduce an analogy of the curl and gradient operator, called weak
curl and weak gradient operators, when the applied functions are
discontinuous.

\subsection{Weak gradient and discrete weak gradient}

The concept of weak gradient and its discrete analogue was
introduced in \cite{wy}. This subsection is presented for the sake
of completeness of presentation.

Let $K$ be any polyhedral domain with boundary $\partial K$. A  weak
function on the region $K$ refers to a function $v=\{v_0, v_b\}$
such that $v_0\in L^2(K)$ and $v_b\in L^2(\partial K)$. The first
component $v_0$ can be understood as the value of $v$ in $K$, and
the second component $v_b$ represents $v$ on the boundary of $K$.
Note that $v_b$ may not necessarily be related to the trace of
$\bv_0$ on $\partial K$ should a trace be well-defined. Denote by
$\W(K)$ the space of weak functions on $K$; i.e.,
\begin{equation}\label{hi.888}
\W(K):= \{v=\{v_0, v_b \}:\ v_0\in L^2(K),\; v_b\in L^2(\partial
K)\}.
\end{equation}
The weak gradient operator is defined as follows.
\medskip

\begin{defi} (Weak Gradient)
The dual of $L^2(K)$ can be identified with itself by using the
standard $L^2$ inner product as the action of linear functionals.
With a similar interpretation, for any $v\in \W(K)$, the weak
gradient of $v$ is defined as a linear functional $\nabla_w v$ in
the dual space of $[H^1(K)]^3$ whose action on each $q\in
[H^1(K)]^3$ is given by
\begin{equation}\label{wg}
(\nabla_w v, q)_K := -(v_0, \nabla\cdot q)_K + \langle v_b,
q\cdot\bn\rangle_{\partial K},
\end{equation}
where $\bn$ is the outward normal direction to $\partial K$,
$(v_0,\nabla\cdot q)_K=\int_K v_0 (\nabla\cdot q)dK$ is the $L^2$
inner product of $v_0$ and $\nabla\cdot q$, and $\langle v_b,
q\cdot\bn\rangle_{\partial K}$ is the $L^2$ inner product of
$q\cdot\bn$ and $v_b$ in $L^2(\partial K)$.
\end{defi}

The Sobolev space $H^1(K)$ can be embedded into the space $\W(K)$ by
an inclusion map $i_\W: \ H^1(K)\to \W(K)$ defined as follows
$$
i_\W(\phi) = \{\phi|_{K}, \phi|_{\partial K}\},\qquad \phi\in H^1(K).
$$
With the help of the inclusion map $i_\W$, the Sobolev space $H^1(K)$
can be viewed as a subspace of $\W(K)$ by identifying each $\phi\in
H^1(K)$ with $i_\W(\phi)$.

\medskip
Let $P_{r}(K)$ be the set of polynomials on $K$ with degree no more
than $r$.

\begin{defi}
(Discrete Weak Gradient) The discrete weak gradient operator, denoted by
$\nabla_{w,r, K}$, is defined as the unique polynomial
$(\nabla_{w,r, K}v) \in [P_r(K)]^3$ satisfying the following
equation
\begin{equation}\label{d-g}
(\nabla_{w,r, K}v, q)_K = -(v_0,\nabla\cdot q)_K+ \langle v_b,
q\cdot\bn\rangle_{\partial K},\qquad \forall q\in [P_r(K)]^d.
\end{equation}
\end{defi}

\subsection{Weak curl and discrete weak curl}\label{Section:WeakCurl}

To define weak curl, we require weak functions $\bv=\{\bv_0,
\bv_b\}$ such that $\bv_0\in [L^2(K)]^3$ and $\bv_b\times\bn\in
[L^2(\partial K)]^3$. The first component $\bv_0$ can be understood
as the value of $\bv$ in $K$. The second component $\bv_b$
represents the value of $\bv$ on the boundary of $K$.

Denote by $\V(K)$ the space of vector-valued weak functions on $K$;
i.e.,
\begin{equation}\label{hi.999}
\V(K) = \{\bv=\{\bv_0, \bv_b \}:\ \bv_0\in [L^2(K)]^3,\;
\bv_b\times\bn\in [L^{2}(\partial K)]^3\}.
\end{equation}
Then, we define a  weak curl operator as follows.
\medskip

\begin{defi}
(Weak Curl) The dual of $[L^2(K)]^3$ can be identified with itself by using the
standard $L^2$ inner product as the action of linear functionals.
With a similar interpretation, for any $\bv\in \V(K)$, the  weak
curl of $\bv$ is defined as a linear functional $\nabla_w
\times\bv$ in the dual space of $[H^1(K)]^3$ whose action on each
$\varphi\in [H^1(K)]^3$ is given by
\begin{equation}\label{w-c}
(\nabla_w\times\bv, \varphi)_K := (\bv_0, \nabla\times\varphi)_K
+\langle \bv_b\times\bn, \varphi\rangle_{\partial K},
\end{equation}
where $\bn$ is the outward normal direction to $\partial K$,
$(\bv_0,\nabla\times\varphi)_K=\int_K \bv_0\cdot\nabla\times\varphi
dK$ is the $L^2$ inner product of $\bv_0$ and $\nabla\times\varphi$,
and $\langle \bv_b\times\bn, \varphi\rangle_{\partial K}$ is the
inner product in $L^2(\partial K)$.
\end{defi}

The Sobolev space $[H^1(K)]^3$ can be embedded into the space $\V(K)$
by an inclusion map $i_V: \ [H^1(K)]^3\to \V(K)$ defined as follows
$$
i_\V(\phi) =\{\phi|_{K}, \phi|_{\partial K}\},\qquad \phi\in [H^1(K)]^3.
$$
Let $K$ be any  polyhedral domain with boundary $\partial K$. For
each face $e\in\partial K$, let $\bt_1$ and $\bt_2$ be two assigned
unit vectors on the face $e$ such that $\bt_1$, $\bt_2$ and $\bn$
are orthogonal each other. Thus, we have
$\bv_b|_e=v_1\bt_1+v_2\bt_2+v_n\bn$. Define
$\bar{\bv}_b=v_1\bt_1+v_2\bt_2$.
Obviously,$\bar{\bv}_b\times\bn=\bv_b\times\bn$. Since the quantity
of interest is not $\bv_b$ but $\bv_b\times\bn$, we will let
$\bv_b=\bar{\bv}_b$ in order to reduce the number of the unknowns.
\medskip

\begin{defi} (Discrete Weak Curl)
For a given $K$, a discrete weak curl operator,
denoted by $\nabla_{w,r,K}\times$, is defined as the unique polynomial
$(\nabla_{w,r.K}\times\bv) \in [P_r(K)]^3$ that satisfies the following
equation
\begin{equation}\label{d-c}
  (\nabla_{w,r,K}\times\bv, \varphi)_K := (\bv_0, \nabla\times\varphi)_K
 + \langle \bv_b\times\bn, \varphi\rangle_{\partial K},\qquad
   \forall \varphi\in [P_r(K)]^3.
\end{equation}
\end{defi}

\section{Numerical Algorithms}\label{Section:WGFEM}

Let ${\cal T}_h$ be a partition of the domain $\Omega$ with mesh
size $h$ that consists of polyhedra of arbitrary shape. Assume that
the partition ${\cal T}_h$ is shape regular in the sense as defined
in \cite{wy-mixed}; i.e. ${\cal T}_h$ satisfies a set of conditions
given in \cite{wy-mixed}. Denote by ${\cal E}_h$ the set of all
faces in ${\cal T}_h$, and let ${\cal E}_h^0={\cal
E}_h\backslash\partial\Omega$ be the set of all interior faces.

Let $e\in\E_h^0$ be shared by two elements $T_1$ and $T_2$. Let
$\bt_1$ and $\bt_2$ be two tangential unit vectors on face $e\in
\E_h$. For $k\ge 1$, define a weak Galerkin finite element spaces
associated with $\T_h$ as
\begin{align} \label{Vh}
V_h =\Big\{ \bv=\{\bv_0, \bv_b=v_1\bt_1+v_2\bt_2\}   : & \ \bv_0|_{T}\in
      [P_k(T)]^3, \\
     & v_1, v_2\in  P_k(e),\ e\subset\pT\Big\}, \nonumber
\end{align}
and
\begin{align} \label{Wh}
W_h =\Big\{w=\{w_0, w_b\} : \quad & \{w_0, w_b\}|_{T}\in
P_{k-1}(T)\times
P_{k}(e), \ e\subset\pT, \\
  & w_b=0 \ {\rm on}\ \partial\Omega\Big\}.\nonumber
\end{align}
We also introduce the following subspace of $V_h$,
\[
V_{h,0} =\left\{ \bv=\{\bv_0, \bv_b\}\in V_h, \;
\bv_b\times\bn|_e=0,\ e\subset\partial\Omega \right\}.
\]
The discrete weak gradient $\nabla_{w,k-1}$ and the discrete weak
curl $\nabla_{w,k}\times$ on the finite element spaces $W_h$ and
$V_h$ can be computed by using (\ref{d-g}) and (\ref{d-c}) on each
element $T$ respectively; i.e.,
\begin{eqnarray*}
(\nabla_{w,k}v)|_T &=&\nabla_{w,k, T} (v|_T),\qquad \forall v\in
W_h\\
(\nabla_{w,k-1}\times\bv)|_T &=&\nabla_{w,k-1, T}\times (\bv|_T),\qquad \forall \bv\in V_h.
\end{eqnarray*}
For simplicity of notation, from now on we shall drop the subscript
$k$ in  $\nabla_{w,k}$ and $k-1$ in $\nabla_{w,k-1}\times$ for the
discrete weak gradient and the discrete weak curl.

Corresponding to the bilinear forms in (\ref{w1})-(\ref{w2}), we
introduce the following bilinear forms:
\begin{eqnarray*}
(\nu\nabla_w\times\bv,\ \nabla_w\times \bw)_h&=&\sum_{T\in\T_h}(\nu\nabla_w\times \bv,\ \nabla_w\times \bw)_T\\
(\bv,\ \nabla_w q)_h&=&\sum_{T\in\T_h}(\nabla_w q,\ \bv)_T.
\end{eqnarray*}
Furthermore, we stabilize the first one by adding an appropriate
stabilization term as follows:
\begin{eqnarray}\label{Stabilized-A-form}
a(\bv,\ \bw)&=&(\nu\nabla_w\times\bv,\
\nabla_w\times\bw)_h+s_1(\bv,\bw),
\end{eqnarray}
where
\begin{eqnarray}\label{Stabilization-T1}
s_1(\bv,\;\bw) &=& \sum_{T\in {\cal T}_h}h^{-1}\l
(\bv_0-\bv_b)\times\bn,\;\;(\bw_0-\bw_b)\times\bn\r_\pT.
\end{eqnarray}
For simplicity of notation, we introduce the following notation
\begin{eqnarray}\label{B-form}
b(\bv,\ q)&=&(\bv_0,\nabla_w q)_h
\end{eqnarray}
and a second stabilization term
\begin{eqnarray}\label{Stabilization-T2}
s_2(p,\;q) & = & \sum_{T\in {\cal T}_h}h\l
p_0-p_b,\;\;q_0-q_b\r_\pT.
\end{eqnarray}

\begin{algorithm}
Find $\bu_h=\{\bu_0,\bu_b\}\in V_h$  and
$p_h=\{p_0,p_b\}\in W_h$ satisfying  $\bu_b\times \bn= Q_b\phi$ on
$\partial \Omega$ and
\begin{eqnarray}
a(\bu_h,\ \bv)-b(\bv,\;p_h)&=&({\bf f},\;\bv_0),\quad\forall\ \bv=\{\bv_0,\; \bv_b\}\in
V_{h,0},\label{wg1}\\
b(\bu_h,\;q)+s_2(p_h,q)&=&-(g,q_0), \quad\forall\ q=\{q_0,\; q_b\}\in
W_h,\label{wg2}
\end{eqnarray}
where $Q_b \phi$ is an approximation of the boundary value in the
polynomial space $[P_k(\partial T\cap \partial\Omega)]^3$. For
simplicity, one may take $Q_b \phi$ as the standard $L^2$ projection
of the boundary value $\phi$ on each boundary segment.
\end{algorithm}
\smallskip

\begin{lemma}\label{lemma-ue}
The weak Galerkin finite element algorithm (\ref{wg1})-(\ref{wg2})
has a unique solution.
\end{lemma}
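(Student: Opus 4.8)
The plan is to exploit that (\ref{wg1})--(\ref{wg2}) is a square linear system, so existence and uniqueness are equivalent and it suffices to show that the homogeneous problem (with ${\bf f}=0$, $g=0$ and $\phi=0$) admits only the trivial solution. Thus I would assume $\bu_h\in V_{h,0}$ and $p_h\in W_h$ solve (\ref{wg1})--(\ref{wg2}) with zero data and aim to prove $\bu_h=0$ and $p_h=0$.

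The first step is an energy argument. Taking $\bv=\bu_h$ in (\ref{wg1}) and $q=p_h$ in (\ref{wg2}) and adding the two identities, the terms $b(\bu_h,p_h)$ cancel, leaving $a(\bu_h,\bu_h)+s_2(p_h,p_h)=0$. Since $\nu=\mu/\epsilon>0$ and both $s_1$ and $s_2$ are sums of nonnegative boundary terms, every contribution in $a(\bu_h,\bu_h)=(\nu\nabla_w\times\bu_h,\nabla_w\times\bu_h)_h+s_1(\bu_h,\bu_h)$ and in $s_2(p_h,p_h)$ must vanish. This yields $\nabla_w\times\bu_h=0$ on each $T$, $(\bu_0-\bu_b)\times\bn=0$ on each $\pT$, and $p_0=p_b$ on each $\pT$.

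Next I would recover $p_h=0$. Because $\nabla_w\times\bu_h=0$ and the tangential jumps vanish, $a(\bu_h,\bv)=0$ for \emph{every} $\bv\in V_{h,0}$, so (\ref{wg1}) reduces to $b(\bv,p_h)=0$ for all such $\bv$. Choosing $\bv_0=\nabla_w p_h$ elementwise (admissible since $\nabla_w p_h\in[P_{k-1}(T)]^3\subset[P_k(T)]^3$ and only $\bv_0$ enters $b$ by (\ref{B-form})) gives $\sum_{T}\|\nabla_w p_h\|_T^2=0$, hence $\nabla_w p_h=0$. Feeding this back into the defining relation (\ref{d-g}) and using $p_0=p_b$ on $\pT$, an integration by parts shows $(\nabla p_0,\varphi)_T=0$ for all $\varphi\in[P_{k-1}(T)]^3$, so $\nabla p_0=0$ and $p_0$ is constant on each element. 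The single-valuedness of $p_b$ across interior faces, together with $p_b=0$ on $\partial\Omega$, then propagates this constant to zero, giving $p_h=0$.

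The remaining and hardest step is to show $\bu_h=0$. With $p_h=0$, equation (\ref{wg2}) becomes $b(\bu_h,q)=0$ for all $q\in W_h$, i.e. $\bu_0$ is $L^2$-orthogonal to every discrete weak gradient. I expect the main obstacle here: one must combine this orthogonality with $\nabla_w\times\bu_h=0$ and the tangential boundary condition $\bu_b\times\bn=0$ on $\partial\Omega$ to force $\bu_h=0$, a discrete analogue of the fact that a curl-free, divergence-free field with vanishing tangential trace vanishes. This amounts to a discrete Friedrichs/Helmholtz-type inequality, equivalently the coercivity of $a(\cdot,\cdot)$ on the kernel of $b(\cdot,\cdot)$, and is delicate precisely because the degree mismatch between $\bu_0\in[P_k(T)]^3$ and $\nabla_w q\in[P_{k-1}(T)]^3$ prevents one from testing the orthogonality directly against $\bu_0$; the argument must instead exhibit a suitable discrete potential $q\in W_h$ capturing $\bu_0$, or invoke such a discrete stability estimate.
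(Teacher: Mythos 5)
Your energy argument and your derivation of $p_h=0$ are sound, and in fact slightly streamline the paper's ordering: you correctly observe that $\nabla_w\times\bu_h=0$ together with $(\bu_0-\bu_b)\times\bn=0$ already forces $a(\bu_h,\bv)=0$ for every $\bv\in V_{h,0}$, so the identity $b(\bv,p_h)=0$ can be exploited before $\bu_h=0$ is known (the paper postpones this until after $\bu_h=0$ is established, but the computation itself --- test with $\bv=\{\nabla p_0,0\}$, use $p_0=p_b$, conclude $p_0$ is elementwise constant and then zero --- is essentially the one you give). One small inaccuracy: the discrete weak gradient on $W_h$ takes values in $[P_k(T)]^3$, not $[P_{k-1}(T)]^3$; this does not affect the admissibility of your test function.

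The genuine gap is the step you explicitly leave open: proving $\bu_h=0$. This is the heart of the lemma, and it cannot be deferred to an unproved ``discrete Friedrichs/Helmholtz inequality''; the paper invokes no such discrete stability estimate and constructs no discrete potential in $W_h$. Instead it upgrades the discrete information to strong statements and then uses a continuous-level uniqueness argument. Concretely: (i) from $\cw\bu_h=0$ and $\bu_0\times\bn=\bu_b\times\bn$, the definition (\ref{d-c}) plus integration by parts gives $(\curl\bu_0,\bv)_T=0$ for all $\bv\in[P_{k-1}(T)]^3$, hence $\curl\bu_0=0$ on each element; (ii) the orthogonality $b(\bu_h,q)=0$ is tested with $q=\{\nabla\cdot\bu_0,\,0\}$ to obtain $\nabla\cdot\bu_0=0$ elementwise, and with $q_0=0$ and $q_b$ equal to the jump of $\bu_0\cdot\bn$ to obtain normal continuity of $\bu_0$ across interior faces --- note that your worry about a ``degree mismatch'' is moot on this route, since one tests against $\nabla\cdot\bu_0\in P_{k-1}(T)$ and against the normal jump in $P_k(e)$, never against $\bu_0$ itself; (iii) since $\bu_0\times\bn=\bu_b\times\bn$ with $\bu_b$ single-valued on faces and zero on $\partial\Omega$, the field $\bu_0$ is globally curl-free, so for simply connected $\Omega$ one writes $\bu_0=\nabla\phi$, deduces $\Delta\phi=0$ in $\Omega$ and $\nabla\phi\times\bn=0$ on $\partial\Omega$, hence $\phi$ is constant and $\bu_0=0$; finally $\bu_b=0$ because $\bu_b$ is purely tangential and $\bu_b\times\bn=\bu_0\times\bn=0$. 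Without step (iii) --- or the discrete inf-sup machinery you allude to but do not supply --- the proof is incomplete.
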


\smallskip

\begin{proof}
It suffices to show that zero is the only solution of
(\ref{wg1})-(\ref{wg2}) if $\bbf=0, \phi=0,$ and $g=0$. To this end,
assume that the homogeneous conditions are given. Take $\bv=\bu_h$
and $q=p_h$ in (\ref{wg1})-(\ref{wg2}). By adding the two resulting
equations, we obtain
\begin{eqnarray*}
 (\nu\cw\bu_h,\ \cw\bu_h)_h
   & + &\sum_{T\in\T_h}h^{-1}\l(\bu_0-\bu_b)\times\bn,\
(\bu_0-\bu_b)\times\bn\r_\pT\\
    & + & \sum_{T\in\T_h}h\l p_0-p_b,\ p_0-p_b\r_\pT=0,
\end{eqnarray*}
which implies $\nabla_w\times\bu_h=0$ on each $T$,
$\bu_0\times\bn=\bu_b\times\bn$ and $p_0=p_b$ on $\pT$. Note that
the boundary condition implies $\bu_b\times\bn=0$ on each
$e\subset\partial\Omega$. Then, it follows from (\ref{d-c}) and the
integration by parts that for any $\bv\in [P_{k-1}(T)]^3$
\begin{eqnarray*}
0&=&(\cw\bu_h,\bv)_T\\
&=&(\bu_0,\ \curl\bv)_T+\l\bu_b\times\bn,\ \bv\r_\pT\\
&=&(\curl\bu_0,\ \bv)_T+\l(\bu_b-\bu_0)\times\bn,\ \bv\r_\pT\\
&=&(\curl\bu_0,\ \bv)_T,
\end{eqnarray*}
which gives $\curl\bu_0=0$ on each $T\in\T_h$. Using (\ref{wg2}),
(\ref{d-g}) and the integration by parts, we have
\begin{eqnarray*}
0&=&\sum_{T\in\T_h}(\bu_0,\nabla_w
q)_T=-\sum_{T\in\T_h}(\nabla\cdot\bu_0,\
q_0)_T+\sum_{T\in\T_h}\l\bu_0\cdot\bn,\ q_b\r_\pT.
\end{eqnarray*}
Letting $q_0=\nabla\cdot\bu_0$ and $q_b=0$ in the above equation
yield $\nabla\cdot \bu_0=0$ on each $T\in\T_h$. Next, by letting
$q_0=0$ and $q_b$ be the jump of $\bu_0\cdot\bn$ on each interior
face $e$, we conclude that $\bu_0$ is continuous across each
interior face $e$ in the normal direction.

Note that $\nabla\times \bu_0=0$. Thus, there exists a potential
function $\phi$ such that $\bu_0=\nabla\phi$ on $\Omega$. It follows
from $\nabla\cdot \bu_0=0$ and the fact
that $\bu_0\cdot\bn$ is continuous
that $\Delta\phi=0$ is strongly satisfied in $\Omega$. The boundary
condition of (\ref{bcc1}) implies that
$\bu_0\times\bn=\nabla\phi\times\bn=0$ on $\partial\Omega$.
Therefore, $\phi$ must be a constant on $\partial\Omega$. The
uniqueness of the solution of the Laplace equation implies that
$\phi=const$ is the only solution of $\Delta\phi=0$ if $\Omega$ is
simply connected. Then we must have $\bu_0=\nabla\phi=0$. Since
$\bu_b\times\bn=\bu_0\times\bn=0$, we have $\bu_b=0$.

Since $\bu_h=0$, we then have $b(\bv,p_h)=0$ for any $\bv\in
V_{h,0}$. It follows from the definition of $b(\cdot,\cdot)$ and
$\nabla_w$ that
\begin{eqnarray}\label{eu1}
0&=&b(\bv,\ p_h)=(\bv_0,\nabla_w
p_h)_h\\
&=&-\sum_{T\in\T_h}(\nabla\cdot\bv_0,\ p_0)_T+\sum_{T\in\T_h}\l
\bv_0\cdot\bn,\ p_b\r_\pT\nonumber\\
&=&\sum_{T\in\T_h}(\bv_0,\nabla p_0)_T,\nonumber
\end{eqnarray}
where we have used the fact that $p_0=p_b$ on $\partial T$. Letting
$\bv=\{\bv_0,\bv_b\}=\{\nabla p_0, 0\}$ in (\ref{eu1}) gives $\nabla
p_0=0$ on each $T\in \T_h$, i.e. $p_0$ is a constant on $T\in\T_h$. Using the facts $p_0=p_b$
and  $p_b=0$ on $\partial\Omega$, we obtain $p_h=0$.
\end{proof}

\section{Error Equations}\label{Section:ErrorEquation}

For each element $T\in \T_h$, denote by $\bQ_0$ and $Q_0$ the $L^2$
projections onto $[P_k(T)]^3$ and $P_{k-1}(T)$ respectively. Let
$Q_b$  be the $L^2$ projection onto $P_k(e)$. Then we can define two
projections onto the finite element space $V_h$ and $W_h$ such that
on each element $T$,
$$\bQ_h\bv=\{\bQ_0\bv,Q_b\bv=Q_b(v_1)\bt_1+Q_b(v_2)\bt_2\},\quad  Q_hq=\{Q_0q,Q_bq\}.$$
In addition, denote by $\Q_h$ the local $L^2$ projection onto
$[P_{k-1}(T)]^3$. The projection operators $\Q_h$, $Q_h$ and $\bQ_h$
have some useful properties as stated in the following Lemma.

\begin{lemma}\label{lem-0} Let $\bQ_h=\{\bQ_0, Q_b\}$ and
$Q_h=\{Q_0, Q_b\}$ be the projection operators onto the finite
element spaces $V_h$ and $W_h$ respectively. Then, we have
\begin{equation}\label{key}
\cw (\bQ_h \bu) = \Q_h (\curl\bu)\qquad \forall \bu\in
H(curl;\Omega)
\end{equation}
and
\begin{equation}\label{key11}
\nabla_w(Q_h q) = \bQ_0(\nabla q)\qquad \forall q\in H^1(\Omega).
\end{equation}
\end{lemma}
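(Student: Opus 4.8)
The plan is to prove the two commutativity properties \eqref{key} and \eqref{key11} directly from the defining relations \eqref{d-c} and \eqref{d-g} of the discrete weak operators, exploiting the defining property of the $L^2$ projections. Both identities assert that the discrete weak operator applied to a projected function equals the projection of the classical operator; since both sides live in $[P_{k-1}(T)]^3$ (for the curl identity) respectively $[P_{k-1}(T)]^3$ (for the gradient identity, as $\nabla_w$ maps into $[P_{k-1}(T)]^3$ here), it suffices to test against an arbitrary polynomial $\varphi\in[P_{k-1}(T)]^3$ and show the two inner products agree.

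For \eqref{key}, I would fix $\bu\in H(\mathrm{curl};\Omega)$ and test $\cw(\bQ_h\bu)$ against $\varphi\in[P_{k-1}(T)]^3$ on a single element $T$. By the definition \eqref{d-c} of the discrete weak curl, $(\cw(\bQ_h\bu),\varphi)_T=(\bQ_0\bu,\curl\varphi)_T+\langle (Q_b\bu)\times\bn,\varphi\rangle_{\pT}$. The key step is to remove the projections: since $\curl\varphi\in[P_{k-1}(T)]^3$, the defining property of the $L^2$ projection $\bQ_0$ gives $(\bQ_0\bu,\curl\varphi)_T=(\bu,\curl\varphi)_T$; similarly, since the tangential trace $\varphi\times\bn$ restricted to each face lies in the polynomial space onto which $Q_b$ projects, the boundary term satisfies $\langle(Q_b\bu)\times\bn,\varphi\rangle_{\pT}=\langle\bu\times\bn,\varphi\rangle_{\pT}$. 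I would then integrate by parts to convert $(\bu,\curl\varphi)_T+\langle\bu\times\bn,\varphi\rangle_{\pT}$ into $(\curl\bu,\varphi)_T$, and finally invoke the defining property of $\Q_h$ (projection onto $[P_{k-1}(T)]^3$) to write $(\curl\bu,\varphi)_T=(\Q_h(\curl\bu),\varphi)_T$. Matching the two sides for all $\varphi$ yields \eqref{key}.

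The argument for \eqref{key11} is entirely analogous. I would test $\nabla_w(Q_hq)$ against $q\in[P_{k-1}(T)]^3$ (abusing notation for the test polynomial), apply \eqref{d-g} to get $-(Q_0q,\di\,\varphi)_T+\langle Q_bq,\varphi\cdot\bn\rangle_{\pT}$, strip the projections $Q_0$ and $Q_b$ using that $\di\,\varphi$ and $\varphi\cdot\bn$ lie in the relevant polynomial spaces, integrate by parts to obtain $(\nabla q,\varphi)_T$, and identify this with $(\bQ_0(\nabla q),\varphi)_T$ since $\varphi\in[P_{k-1}(T)]^3\subset[P_k(T)]^3$ and $\bQ_0$ projects onto $[P_k(T)]^3$.

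The main obstacle is the careful bookkeeping of the polynomial degrees, which is exactly what makes the projections disappear. For the curl identity one must check that $\curl\varphi$ for $\varphi\in[P_{k-1}(T)]^3$ stays within the range of $\bQ_0$ (degree $k$, which it does since $\curl$ lowers degree by one) and that the tangential component of $\varphi$ on each face is captured by $Q_b$ (projection onto $P_k(e)$). The subtle point is the boundary term: because $V_h$ uses only the tangential part $\bv_b=v_1\bt_1+v_2\bt_2$ and $Q_b$ acts componentwise on the tangential components, one must verify $\langle(Q_b\bu)\times\bn,\varphi\rangle_{\pT}=\langle\bu\times\bn,\varphi\rangle_{\pT}$, which holds because $\varphi\times\bn$ restricted to a face is a degree-$(k-1)$ tangential polynomial and the cross product with $\bn$ only sees the tangential part of $\bu$. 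Once these degree conditions are confirmed, the commutativity follows, and the two identities hold on each $T$ and hence globally.
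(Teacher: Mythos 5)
Your treatment of the curl identity \eqref{key} is correct and is essentially the paper's own proof: test against $\varphi\in[P_{k-1}(T)]^3$, strip $\bQ_0$ and $Q_b$ using the $L^2$-projection property (your remark that the boundary term only sees the tangential part of $\bu$, so that $\langle(Q_b\bu)\times\bn,\varphi\rangle_\pT=\langle\bu\times\bn,\varphi\rangle_\pT$, is the right justification and is in fact more careful than what the paper writes), integrate by parts, and invoke the definition of $\Q_h$.

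For the gradient identity \eqref{key11}, however, there is a genuine gap caused by a misreading of the degree conventions. In this paper the discrete weak gradient on $W_h$ is $\nabla_{w,k}$, i.e.\ it maps into $[P_k(T)]^3$, not $[P_{k-1}(T)]^3$; this is forced by the statement of the lemma itself, since the right-hand side $\bQ_0(\nabla q)$ lies in $[P_k(T)]^3$ (it is the degree-$k-1$ index that belongs to the weak \emph{curl}). Consequently, testing only against $\varphi\in[P_{k-1}(T)]^3$ does not suffice: it shows merely that the two sides of \eqref{key11} have the same $L^2$ projection onto $[P_{k-1}(T)]^3$, which does not imply equality of two elements of $[P_k(T)]^3$. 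The fix is immediate and restores the paper's proof: test against arbitrary $\bv\in[P_k(T)]^3$. The degree bookkeeping you are worried about still goes through at this level, since $\nabla\cdot\bv\in P_{k-1}(T)$, which is exactly the range of $Q_0$, and $\bv\cdot\bn|_e\in P_k(e)$, which is exactly the range of $Q_b$; so both projections can still be stripped, and the rest of your argument (integration by parts, then the defining property of $\bQ_0$) is unchanged.
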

\begin{proof}
Using (\ref{d-c}), the integration by parts, and the definition of
$\bQ_h$ and $\Q_h$, we have
\begin{eqnarray*}
(\cw (\bQ_h \bu),\; \bw)_T &=& (\bQ_0\bu,\; \curl\bw)_T + \langle
(Q_b\bu)\times\bn,\; \bw\rangle_{\pT}\\
&=&(\bu,\; \curl\bw)_T + \langle \bu\times\bn,\; \bw\rangle_{\partial T}\\
&=&(\curl\bu,\; \bw)_T=(\Q_h(\curl \bu),\; \bw)_T
\end{eqnarray*}
for any $\bw\in [P_{k-1}(T)]^3$. This implies that (\ref{key}) holds
true.

As to (\ref{key11}), we use the definition of $Q_h$ and the discrete
gradient operator $\nabla_w$ to obtain
\begin{eqnarray*}
(\nabla_w (Q_h p),\; \bv)_T &=& -(Q_0p,\; \nabla\cdot\bv)_T +
\langle
Q_b p,\; \bv\cdot\bn\rangle_{\pT}\\
&=&-(p,\; \nabla\cdot\bv)_T + \langle p,\; \bv\cdot\bn\rangle_{\partial T}\\
&=&(\nabla p,\; \bv)_T=(\bQ_0(\nabla p),\; \bv)_T
\end{eqnarray*}
for all $\bv\in [P_k(T)]^3$, which verifies the desired relation
(\ref{key11}).
\end{proof}

\medskip

Define two error functions as follows
\begin{eqnarray}\label{error-u}
\be_h&=&\{\be_0,\;\be_b\}=\{\bQ_0\bu-\bu_0,\;Q_b\bu-\bu_b\},\\
\varepsilon_h&=&\{\varepsilon_0,\;\varepsilon_b\}=\{Q_0
p-p_0,\;Q_bp-p_b\}.\label{error-p}
\end{eqnarray}
The rest of this section is to derive some equations that the above
error functions must satisfy. For simplicity of analysis, we assume
that the coefficient $\nu$ in (\ref{w1}) is a piecewise constant
function with respect to the finite element partition $\T_h$.

\begin{lemma}\label{Lemma:error-equation}
Let $(\bu_h; p_h)$ be the WG finite element solution arising from
(\ref{wg1}) and (\ref{wg2}), and $(\be_h; \varepsilon_h)$ be the
error between the WG finite element solution and the $L^2$
projection of the exact solution as defined in
(\ref{error-u})-(\ref{error-p}). Then, the following equations are
satisfied
\begin{eqnarray}
a(\be_h,\ \bv)-b(\bv,\ \epsilon_h)&=&\varphi_\bu(\bv)
\quad\;\;\forall\bv\in V_{h,0},\label{ee1}\\
b(\be_h,\ q)+s_2(\epsilon_h,\ q)&=&\phi_{\bu,p}(q)\quad\forall q\in
W_h,\label{ee2}
\end{eqnarray}
where
\begin{eqnarray}\label{varphi-u-v}
\varphi_\bu(\bv) &=&s_1(\bQ_h\bu,\ \bv)-l_1(\bu,\ \bv),\\
\phi_{\bu,p}(q) &=& s_2(Q_hp,\ q)+l_2(\bu,q),\label{phi-up-q}
\end{eqnarray}
and
\begin{eqnarray}
l_1(\bu,\ \bv)&=&\sum_{T\in\T_h}\l(I-\Q_h)\curl\bu, \ \nu(\bv_b-\bv_0)\times\bn \r_\pT\label{l1}\\
l_2(\bu,\ q)&=&\sum_{T\in\T_h}\langle q_0-q_b,\
(\bu-\bQ_0\bu)\cdot\bn\rangle_\pT.\label{l2}
\end{eqnarray}
\end{lemma}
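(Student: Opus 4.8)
The plan is to obtain both identities by Galerkin orthogonality. Since $\be_h=\bQ_h\bu-\bu_h$ and $\varepsilon_h=Q_hp-p_h$, I would use the bilinearity of $a$, $b$, $s_2$ together with the scheme (\ref{wg1})--(\ref{wg2}) to write, for $\bv\in V_{h,0}$ and $q\in W_h$,
\[
a(\be_h,\bv)-b(\bv,\varepsilon_h)=a(\bQ_h\bu,\bv)-b(\bv,Q_hp)-(\bbf,\bv_0),
\]
\[
b(\be_h,q)+s_2(\varepsilon_h,q)=b(\bQ_h\bu,q)+s_2(Q_hp,q)+(g,q_0).
\]
Because the stabilization terms $s_1(\bQ_h\bu,\bv)$ and $s_2(Q_hp,q)$ already appear verbatim in $\varphi_\bu$ and $\phi_{\bu,p}$, it then suffices to establish the two reduced consistency identities
\[
(\nu\cw\bQ_h\bu,\cw\bv)_h-b(\bv,Q_hp)-(\bbf,\bv_0)=-l_1(\bu,\bv),\qquad b(\bQ_h\bu,q)+(g,q_0)=l_2(\bu,q).
\]

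For the first identity I would start from the commuting property (\ref{key}), $\cw\bQ_h\bu=\Q_h(\curl\bu)$, so that $(\nu\cw\bQ_h\bu,\cw\bv)_T=(\nu\Q_h\curl\bu,\cw\bv)_T$. Since $\nu$ is piecewise constant and $\nu\Q_h\curl\bu\in[P_{k-1}(T)]^3$, I would then apply the definition (\ref{d-c}) in the integration-by-parts form $(\cw\bv,\varphi)_T=(\curl\bv_0,\varphi)_T+\langle(\bv_b-\bv_0)\times\bn,\varphi\rangle_{\pT}$ (the same manipulation used in Lemma \ref{lemma-ue}) with $\varphi=\nu\Q_h\curl\bu$. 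In the volume term the projection can be dropped since $\nu\curl\bv_0\in[P_{k-1}(T)]^3$, leaving $(\nu\curl\bu,\curl\bv_0)_T$; in the face term, writing $\nu\Q_h\curl\bu=\nu\curl\bu-\nu(I-\Q_h)\curl\bu$ produces exactly $-l_1(\bu,\bv)$ after summation. Integrating by parts once more on $(\nu\curl\bu,\curl\bv_0)_T$ and invoking the strong form $\curl(\nu\curl\bu)=\bbf+\nabla p$ underlying (\ref{w1}) reduces the volume part to $(\bbf,\bv_0)+\sum_{T\in\T_h}(\nabla p,\bv_0)_T$, while the remaining face term $\sum_{T\in\T_h}\langle\bv_b\times\bn,\nu\curl\bu\rangle_{\pT}$ vanishes: on interior faces by the tangential continuity of $\nu\curl\bu\in H(\mathrm{curl};\Omega)$ and the single-valuedness of $\bv_b$, and on boundary faces because $\bv_b\times\bn=0$ for $\bv\in V_{h,0}$. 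Finally, (\ref{key11}) and the projection property identify $\sum_{T\in\T_h}(\nabla p,\bv_0)_T$ with $b(\bv,Q_hp)$, which closes the first identity.

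For the second identity I would expand $b(\bQ_h\bu,q)=\sum_{T\in\T_h}(\nabla_w q,\bQ_0\bu)_T$, apply the definition (\ref{d-g}) of the discrete weak gradient, and integrate by parts so that the divergence falls on $\bu$. Using the strong form $\nabla\cdot\bu=g$ underlying (\ref{w2}) turns the volume contribution into $-(g,q_0)$, which cancels the additive $(g,q_0)$. After splitting $\bu=(\bu-\bQ_0\bu)+\bQ_0\bu$ in the surviving face integrals and regrouping, these collapse to $l_2(\bu,q)$ together with $\sum_{T\in\T_h}\langle q_b,\bu\cdot\bn\rangle_{\pT}$; the latter vanishes by the normal continuity of $\bu\cdot\bn$ across interior faces (since $\bu\in H(\mathrm{div};\Omega)$ from $\nabla\cdot\bu=g\in L^2$) and by $q_b=0$ on $\partial\Omega$ for $q\in W_h$.

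The step I expect to be the main obstacle is the boundary-term bookkeeping in the curl identity: one integrates by parts twice in opposite directions and must, at each stage, track precisely which face integrals assemble into $l_1$, which telescope to zero by inter-element continuity, and which are annihilated by the homogeneous boundary condition. The two projection swaps $\Q_h\curl\bu\leftrightarrow\curl\bu$ and $\bQ_0(\nabla p)\leftrightarrow\nabla p$ must each be justified by verifying that the partner factor lies in the relevant polynomial space so that the $L^2$ projection acts as the identity; and the vanishing of the residual face terms relies on the standing regularity of the exact solution, namely $\nu\curl\bu\in H(\mathrm{curl};\Omega)$ and $\bu\in H(\mathrm{div};\Omega)$, which I would state explicitly as the hypotheses under which (\ref{ee1})--(\ref{ee2}) hold.
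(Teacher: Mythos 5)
Your proposal is correct and follows essentially the same route as the paper: you invoke the commuting identities (\ref{key}) and (\ref{key11}), move the discrete weak curl onto $\curl\bv_0$ via the same integration-by-parts manipulation as in (\ref{m1}), use the strong equations to eliminate the volume terms, and track the face residuals into $l_1$ and $l_2$ while the terms $\sum_{T}\langle\bv_b\times\bn,\nu\curl\bu\rangle_{\pT}$ and $\sum_{T}\langle q_b,\bu\cdot\bn\rangle_{\pT}$ cancel by inter-element continuity and the boundary conditions, exactly as the paper does. The only cosmetic difference is that you subtract the scheme first (Galerkin-orthogonality framing) and then prove the consistency identities, whereas the paper derives what $(\bQ_h\bu,Q_hp)$ satisfies in (\ref{m5})--(\ref{m6}) and subtracts afterwards; your explicit statement of the regularity hypotheses $\nu\curl\bu\in H(\mathrm{curl};\Omega)$ and $\bu\in H(\mathrm{div};\Omega)$ is a welcome clarification of what the paper leaves implicit.
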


\begin{proof}
Using (\ref{key}), (\ref{d-c}), and the integration by parts we have
\begin{eqnarray} \label{m1}
&  & (\nu\cw (\bQ_h\bu),\;\cw \bv)_T \\
&=&(\nu\Q_h(\curl\bu),\;\cw\bv)_T\nonumber\\
&=&(\nu\bv_0,\ \curl \Q_h(\curl \bu))_T+\langle \nu\bv_b\times\bn,
 \  \Q_h(\curl\bu)\rangle_\pT\nonumber\\
&=&(\nu\curl\bv_0,\; \Q_h(\curl \bu))_T+\langle \nu(\bv_b-\bv_0)\times\bn,
  \ \Q_h(\curl\bu)\rangle_\pT\nonumber\\
&=&(\nu\curl\bu,\;\curl\bv_0)_T+\l
\Q_h(\curl\bu),\nu(\bv_b-\bv_0)\times\bn\r_\pT.\nonumber
\end{eqnarray}
It follows from (\ref{key11}) that
\begin{eqnarray}
(\nabla_w(Q_h p),\bv_0)_T=(\bQ_0\nabla p,\bv_0)_T=(\nabla p,\bv_0)_T.\label{m2}
\end{eqnarray}
Next, using the definition of $\nabla_w$ and $\bQ_0$, we obtain
\begin{eqnarray}
(\bQ_0\bu,\ \nabla_wq)_T&=&-(q_0, \nabla\cdot (\bQ_0\bu))_T+\l q_b,\;\bQ_0\bu\cdot\bn\r_\pT\label{m33}\\
&=&(\nabla q_0,\ \bu)_T-\langle q_0-q_b,\ \bQ_0\bu\cdot\bn\rangle_\pT.\nonumber
\end{eqnarray}
Testing (\ref{moment1}) by $\bv_0$ with $\bv=\{\bv_0,\;\bv_b\}\in V_{h,0}$ gives
\begin{equation}\label{m3}
(\curl(\nu\curl\bu),\;\bv_0)- (\nabla p,\ \bv_0)=(\bbf,\; \bv_0).
\end{equation}
It follows from the integration by parts that
\[
(\curl(\nu\curl\bu),\;\bv_0)=\sum_{T\in\T_h}(\nu\curl\bu,\
\curl\bv_0)_T+\sum_{T\in\T_h}\l\nu(\bv_b-\bv_0)\times\bn,\
\curl\bu\r_\pT,
\]
where we use the fact that $\sum_{T\in\T_h}\langle\bv_b\times\bn, \
\nu\curl\bu\rangle_\pT=0$. Using (\ref{m1}) and the equation above,
we have
\begin{eqnarray} \label{m4}
(\curl(\nu\curl\bu),\;\bv_0)&=&(\nu\cw (\bQ_h\bu),\ \cw\bv)_h\\
& &\quad + \sum_{T\in\T_h}\l (I-\Q_h)\curl\bu,\
\nu(\bv_b-\bv_0)\times\bn\r_\pT. \nonumber
\end{eqnarray}
Substituting (\ref{m2}) and (\ref{m4}) into (\ref{m3}) yields
\[
(\nu\cw (\bQ_h\bu),\ \cw\bv)_h-(\nabla_wQ_h p,\bv_0)_h=(\bbf,\ \bv_0)-l_1(\bv,\ \bu).
\]
Adding $s_1(\bQ_h\bu,\ \bv)$ to both sides of the equation above gives
\begin{equation}\label{m5}
a(\bQ_h\bu,\ \bv)-b(\bv,\ Q_hp)=(\bbf,\;\bv_0)+\varphi_\bu(\bv).
\end{equation}

To derive a second equation, we test equation (\ref{cont1}) by $q_0$
with $q=\{q_0,q_b\}\in W_h$ and then use the integration by parts to
obtain
\begin{equation}\label{m41}
-\sum_{T\in\T_h}(\bu,\;\nabla q_0)_T+ \sum_{T\in\T_h}\l \bu\cdot\bn,\
q_0-q_b\r_{\pT}=(g,\; q_0),
\end{equation}
where we have used the fact $\sum_{T\in\T_h}\langle \bu\cdot\bn, \
q_b\rangle_\pT=0$. Combining (\ref{m33}) with (\ref{m41}) gives
\[
\sum_{T\in\T_h}(\bQ_0\bu,\;\nabla_w q)_T=-(g,\; q_0)+l_2(\bu,q).
\]
Adding $s_2(Q_hp,\ q)$ to both sides of the equation above gives
\begin{equation}\label{m6}
b(\bQ_h\bu,q)+s_2(Q_hp,\ q)=-(g,\; q_0)+\phi_{\bu,p}(q).
\end{equation}
Finally, the differences of (\ref{m5}) and (\ref{wg1}), (\ref{m6})
and (\ref{wg2}) yield the error equations (\ref{ee1}) and
(\ref{ee2}), respectively.
\end{proof}

\section{Preparation for Error Estimates}\label{Section:L2Projections}

For $\bv=\{\bv_0,\bv_b\}\in V_{h,0}$, define $\3bar\bv\3bar$ as
follows
\begin{equation}\label{3barnorm}
\3bar \bv\3bar^2=a(\bv,\;\bv)=\sum_{T\in\T_h}\nu\|\cw\bv\|_T^2
   +\sum_{T\in\T_h}h^{-1}\|(\bv_0-\bv_b)\times\bn\|_\pT^2.
\end{equation}
It is clear that $\3bar\cdot\3bar$ defines merely a semi-norm for
the linear space $V_{h,0}$. A norm can be derived from the semi-norm
$\3bar\bv\3bar$ by adding two more terms given as follows
\begin{equation}\label{3bar1nomr}
\3bar\bv\3bar_1 = \3bar\bv\3bar + \left(\sum_{T\in\T_h}
\|\nabla\cdot\bv_0\|_T^2
\right)^{\frac12}+\left(\sum_{e\in\E_h^0}h^{-1}\|\jump{\bv_0\cdot\bn}\|_e^2
\right)^{\frac12},
\end{equation}
where $\jump{\bv_0\cdot\bn}$ is the jump of the function $\bv_0$ at
each edge/face in the normal direction. The proof of Lemma
\ref{lemma-ue} can be employed to verify that $\3bar\cdot\3bar_1$ is
indeed a norm in $V_{h,0}$. For convenience, we also use the
following notation:
\begin{equation}\label{Semi-Norm-v1h}
|\bv|_{1,h}:=
\left(\sum_{T\in\T_h}h^{-1}\|(\bv_0-\bv_b)\times\bn\|^2_{\pT}\right)^{1/2}.
\end{equation}

The linear space $W_h$ can be equipped with the following norm
$$
\3bar q\3bar_0 = |q|_{0,h} + h \|\nabla q\|_{0,h},
$$
where
$$
|q|_{0,h}^2=\sum_{T\in\T_h}h\|q_0-q_b\|_\pT^2
$$
and
\begin{equation}\label{q-h1-seminorm}
\|\nabla q\|_{0,h} = \left(\sum_{T\in\T_h} \|\nabla
q_0\|_{T}^2\right)^{\frac12}
\end{equation}
for any $q\in W_h$.

\smallskip

The following Lemma provides some approximation estimates for the
projections $\bQ_h$, $\Q_h$, and $Q_h$.

\begin{lemma}\label{lem-1}
Let $\T_h$ be a WG shape regular partition of $\Omega$, $\bw\in
[H^{t+1}(\Omega)]^3$, $\rho\in H^t(\Omega)$, and $0\le t\le k$.
Then, for $0\le s\le 1$, we have
\begin{eqnarray}
&&\sum_{T\in\T_h} h_T^{2s}\|\bw-\bQ_0\bw\|_{s,T}^2\le C h^{2(t+1)}
\|\bw\|^2_{t+1},\label{Qh}\\
&&\sum_{T\in\T_h} h_T^{2s}\|\curl\bw-\Q_h(\curl\bw)\|^2_{s,T} \le
Ch^{2t}
\|\bw\|^2_{t+1},\label{Rh}\\
&&\sum_{T\in\T_h} h_T^{2s}\|\rho-Q_0\rho\|^2_{s,T} \le
Ch^{2t}\|\rho\|^2_{t}.\label{Lh}
\end{eqnarray}
\end{lemma}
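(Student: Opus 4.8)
The three estimates are the standard approximation properties of $L^2$ projections onto piecewise polynomial spaces, and all follow the same template; the only difference lies in the polynomial degree and in the Sobolev order of the function being approximated. The plan is to reduce each bound to a local (element-wise) estimate, since the left-hand sides are sums over $T\in\T_h$ with no coupling between elements, and then to invoke polynomial approximation theory on each $T$ together with the WG shape-regularity assumptions of \cite{wy-mixed}. For \eqref{Qh}, note that $\bQ_0$ is the $L^2$ projection onto $[P_k(T)]^3$ and $\bw\in[H^{t+1}]^3$ with $t+1\le k+1$; for \eqref{Lh}, $Q_0$ projects onto $P_{k-1}(T)$ and $\rho\in H^t$ with $t\le k$; for \eqref{Rh}, setting $\bpsi=\curl\bw$ we have $\bpsi\in[H^t]^3$ with $\|\bpsi\|_{t}\le\|\bw\|_{t+1}$ while $\Q_h$ projects onto $[P_{k-1}(T)]^3$, so \eqref{Rh} is exactly \eqref{Lh} applied component-wise to $\bpsi$. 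Thus it suffices to establish, for the $L^2$ projection $P$ onto $P_r(T)$ and $v\in H^m(T)$ with $0\le m\le r+1$, the local bound $\|v-Pv\|_{s,T}\le C h_T^{m-s}\|v\|_{m,T}$; applying it with $(r,m)=(k,t+1)$ yields \eqref{Qh}, and with $(r,m)=(k-1,t)$ yields \eqref{Lh} and \eqref{Rh}, after squaring, multiplying by $h_T^{2s}$, summing over $T$, and bounding $h_T\le h$.

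To prove this local bound I would use the averaged-Taylor (Bramble--Hilbert) polynomial $\Pi v\in P_r(T)$ guaranteed by the shape-regularity of $T$, which satisfies $\|v-\Pi v\|_{s,T}\le C h_T^{m-s}|v|_{m,T}$ on a star-shaped polyhedron. I would then split $\|v-Pv\|_{s,T}\le\|v-\Pi v\|_{s,T}+\|\Pi v-Pv\|_{s,T}$. The first term is controlled directly by Bramble--Hilbert. For the second, since $\Pi v-Pv$ is a polynomial, the inverse inequality on shape-regular polyhedra gives $\|\Pi v-Pv\|_{s,T}\le C h_T^{-s}\|\Pi v-Pv\|_{0,T}$, and the $L^2$-optimality of $P$, namely $\|v-Pv\|_{0,T}\le\|v-\Pi v\|_{0,T}$, combined with the triangle inequality yields $\|\Pi v-Pv\|_{0,T}\le 2\|v-\Pi v\|_{0,T}\le C h_T^{m}|v|_{m,T}$. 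Hence the second term is also bounded by $C h_T^{m-s}|v|_{m,T}$, which completes the local estimate.

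The routine ingredients — Bramble--Hilbert, the polynomial inverse inequality, and $L^2$-optimality of the projection — are all standard once one works on a single element, so the only genuine obstacle is that $\T_h$ consists of arbitrary polyhedra rather than simplices, which rules out a single affine reference map. I would handle this through the WG shape-regularity conditions of \cite{wy-mixed}: these guarantee that each $T$ is star-shaped with respect to a ball of radius comparable to $h_T$ and that its faces are non-degenerate, which is precisely what makes the averaged-Taylor construction and the inverse inequality valid with constants depending only on the shape-regularity parameters and not on $h$. The one point I would flag is the admissible range of $s$: the left-hand sides of \eqref{Rh} and \eqref{Lh} are finite only when $s\le t$ (so that the $H^s$-seminorm of the projection error makes sense), so those bounds are understood for $0\le s\le\min(1,t)$, with the $s=1$ case requiring $t\ge1$; by contrast \eqref{Qh} holds for the full range $0\le s\le1$ because $\bw\in[H^{t+1}]^3\subset[H^1]^3$.
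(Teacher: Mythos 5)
Your argument is sound, but there is nothing in the paper to compare it against: the authors do not prove Lemma \ref{lem-1} at all, stating only that the proof is ``rather technical'' for general polyhedral meshes and deferring to \cite{wy-mixed}. Your proposal therefore supplies the omitted argument, and it is exactly the standard one that the cited reference develops: reduce to a single element, compare the $L^2$ projection with an averaged-Taylor (Bramble--Hilbert) polynomial on a star-shaped polyhedron, control the polynomial difference by the inverse inequality together with $L^2$-optimality, and note that \eqref{Rh} is \eqref{Lh} applied componentwise to $\curl\bw$ with $\|\curl\bw\|_t\le C\|\bw\|_{t+1}$. The local chain $\|v-Pv\|_{s,T}\le\|v-\Pi v\|_{s,T}+Ch_T^{-s}\|\Pi v-Pv\|_{0,T}\le Ch_T^{m-s}|v|_{m,T}$ is correct, and your observation about the admissible range of $s$ in \eqref{Rh} and \eqref{Lh} --- that the $H^s$-seminorm of the error is only meaningful for $s\le t$, so the stated range $0\le s\le 1$ overreaches when $t<1$ --- identifies a genuine, though harmless, imprecision in the lemma as written: in the paper these estimates are only ever used with $s=0$, or with $s=0$ and $s=1$ combined through the trace inequality \eqref{trace}, where the $s=1$ term is accompanied by the factor $h_T$ that your bound delivers. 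Two technicalities worth recording if you were to write this out in full: for non-integer $t$ one needs the fractional-order Deny--Lions/Bramble--Hilbert lemma and the (true) inequality $\sum_{T}\|\bw\|_{t+1,T}^2\le\|\bw\|_{t+1}^2$ for the Sobolev--Slobodeckij norm; and the validity of the averaged-Taylor construction and the inverse inequality with $h$-independent constants on arbitrary polyhedra is precisely what the WG shape-regularity hypotheses of \cite{wy-mixed} are designed to guarantee, which is the ``technical'' content the authors chose not to reproduce.
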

Since the mesh $\T_h$ is assumed to be very general, the proof of
Lemma \ref{lem-1} is rather technical and can be found in
\cite{wy-mixed}.

\smallskip

Let $K$ be an element with $e$ as a face.  For any function $g\in
H^1(K)$, the following trace inequality has been proved for
arbitrary polyhedra  $K$ in \cite{wy-mixed}.
\begin{equation}\label{trace}
\|g\|_{e}^2 \leq C \left( h_K^{-1} \|g\|_K^2 + h_K \|\nabla
g\|_{K}^2\right).
\end{equation}
In particular, if $\xi$ is a polynomial on $K$, then the standard
inverse inequality can be applied to yield
\begin{equation}\label{trace-poly}
\|\xi\|_{e}^2 \leq C h_K^{-1} \|\xi\|_K^2.
\end{equation}
Using (\ref{trace}) and Lemma \ref{lem-1}, we can prove the
following result.

\begin{lemma}\label{Lemma:myestimates}
Let $\bw\in [H^{t+1}(\Omega)]^3$ and $p\in H^t(\Omega)$ and $\bv\in
V_h$ with $\frac12 <t\le k$. Then
\begin{eqnarray}
|s_1(\bQ_h\bw,\ \bv)|+ |l_1(\bw,\ \bv)| &\le& Ch^t\|\bw\|_{t+1} |\bv|_{1,h},\label{new-mmm1}\\
|s_2(Q_h p,\ q)|+ |l_2(\bw,\ q)| &\le&
Ch^t(\|\bw\|_{t+1}+\|p\|_{t})\ |q|_{0,h},\label{new-mmm888}
\end{eqnarray}
where $l_1(\bw,\bv)$ and $l_2(\bw,q)$ are defined in (\ref{l1}) and
(\ref{l2}).
\end{lemma}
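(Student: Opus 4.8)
The plan is to treat the four quantities separately, bounding each by a Cauchy--Schwarz step followed by the trace inequality (\ref{trace}) and the approximation estimates of Lemma \ref{lem-1}. The one device used throughout is the $L^2$-orthogonality of the boundary projection $Q_b$: on every planar face $e\subset\pT$ the tangential components of $(\bv_0-\bv_b)\times\bn$ and the scalar difference $q_0-q_b$ are polynomials of degree at most $k$, hence lie in the range of $Q_b$. Since $\bt_1,\bt_2,\bn$ are constant on a flat face, one has $(Q_b\bw)\times\bn=Q_b(\bw\times\bn)$, and the orthogonality $\l\psi-Q_b\psi,\chi\r_e=0$ for $\chi$ in the range of $Q_b$ then lets me move $Q_b$ off the smooth factor, replacing $Q_b\bw$ by $\bw$ in $s_1$ and $Q_bp$ by $p$ in $s_2$.

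For the first estimate I would begin with $s_1(\bQ_h\bw,\bv)$. By the replacement just described, each face integral $\l(\bQ_0\bw-Q_b\bw)\times\bn,(\bv_0-\bv_b)\times\bn\r_e$ equals $\l(\bQ_0\bw-\bw)\times\bn,(\bv_0-\bv_b)\times\bn\r_e$. Cauchy--Schwarz in the $h^{-1}$-weighted inner product isolates the factor $|\bv|_{1,h}$ and leaves $\big(\sum_{T\in\T_h} h^{-1}\|(\bQ_0\bw-\bw)\times\bn\|_\pT^2\big)^{1/2}$; since crossing with the unit normal does not increase the pointwise norm, this is controlled by $\sum_{T\in\T_h} h^{-1}\|\bw-\bQ_0\bw\|_\pT^2$. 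Applying (\ref{trace}) to $\bw-\bQ_0\bw$ and then (\ref{Qh}) with $s=0$ and $s=1$ gives $\sum_{T\in\T_h} h^{-1}\|\bw-\bQ_0\bw\|_\pT^2\le Ch^{2t}\|\bw\|_{t+1}^2$, hence the bound $Ch^t\|\bw\|_{t+1}|\bv|_{1,h}$. The term $l_1(\bw,\bv)$ is handled the same way: Cauchy--Schwarz pairs the $h^{1/2}$-weighted factor $\nu(I-\Q_h)\curl\bw$ against the $h^{-1/2}$-weighted $(\bv_b-\bv_0)\times\bn$ (using that $\nu$ is piecewise constant), and then (\ref{trace}) with (\ref{Rh}) at $s=0,1$ shows $\sum_{T\in\T_h} h\|(I-\Q_h)\curl\bw\|_\pT^2\le Ch^{2t}\|\bw\|_{t+1}^2$. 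Summing the two pieces yields (\ref{new-mmm1}).

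For the second estimate, $l_2(\bw,q)$ is the easier of the pair: Cauchy--Schwarz with weights $h^{-1/2}$ and $h^{1/2}$ splits off $|q|_{0,h}$ and leaves $\sum_{T\in\T_h} h^{-1}\|(\bw-\bQ_0\bw)\cdot\bn\|_\pT^2\le\sum_{T\in\T_h} h^{-1}\|\bw-\bQ_0\bw\|_\pT^2\le Ch^{2t}\|\bw\|_{t+1}^2$, exactly as above. For $s_2(Q_hp,q)$ I would again use the orthogonality of $Q_b$, now against $q_0-q_b\in P_k(e)$, to write $s_2(Q_hp,q)=\sum_{T\in\T_h} h\l Q_0p-p,\,q_0-q_b\r_\pT$; Cauchy--Schwarz then splits off $|q|_{0,h}$ and leaves $\sum_{T\in\T_h} h\|p-Q_0p\|_\pT^2$, to be estimated via (\ref{Lh}).

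The main obstacle is precisely this last boundary bound at low regularity. Because the hypothesis only requires $\tfrac12<t\le k$, the datum $p$ need not belong to $H^1$, so the trace inequality (\ref{trace}) cannot be applied to $p-Q_0p$ in the stated form, whose right-hand side involves $\nabla p$. The clean remedy is a trace-approximation estimate of the form $\|p-Q_0p\|_\pT^2\le Ch_T^{2t-1}\|p\|_{t,T}^2$, valid for $t>\tfrac12$ by a scaling and Bramble--Hilbert argument on the reference element; this is exactly where the threshold $t>\tfrac12$, which guarantees a well-defined trace, is used. Multiplying by $h$ and summing over $T$ produces $\sum_{T\in\T_h} h\|p-Q_0p\|_\pT^2\le Ch^{2t}\|p\|_t^2$, completing the bound $Ch^t\|p\|_t\,|q|_{0,h}$ and hence (\ref{new-mmm888}). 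For $t\ge 1$ one may instead invoke (\ref{trace}) together with (\ref{Lh}) at $s=0,1$ directly; the only delicate point is this fractional-regularity bookkeeping in the sub-unit range of $t$.
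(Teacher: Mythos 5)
Your argument is correct and follows essentially the same route as the paper: use the $L^2$-orthogonality of $Q_b$ on each face to replace $Q_b\bw$ by $\bw$ in $s_1$ and $Q_bp$ by $p$ in $s_2$, then apply Cauchy--Schwarz with the $h^{\pm 1/2}$ weights, the trace inequality (\ref{trace}), and the approximation estimates (\ref{Qh}), (\ref{Rh}), (\ref{Lh}) of Lemma \ref{lem-1}. The one place you go beyond the paper is the bound $\sum_{T}h\|p-Q_0p\|_{\pT}^2\le Ch^{2t}\|p\|_t^2$ for $\tfrac12<t<1$, where (\ref{trace}) cannot be applied verbatim to $p-Q_0p$; the paper simply asserts this step, and your fractional trace-approximation estimate is the correct way to justify it.
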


\begin{proof}
Using the definition of $Q_b$, (\ref{trace}) and (\ref{Qh}), we have
\begin{eqnarray*}
|s_1(\bQ_h\bw,\ \bv)|&=&\left|\sum_{T\in\T_h} h^{-1}\langle
(\bQ_0\bw-Q_b\bw)\times\bn,\;
(\bv_0-\bv_b)\times\bn\rangle_\pT\right|\\
&=&\left|\sum_{T\in\T_h} h^{-1} \langle (\bQ_0\bw-\bw)\times\bn,\; (\bv_0-\bv_b)\times\bn\rangle_\pT\right|\\
&\le& \left(\sum_{T\in\T_h}(h^{-2}\|\bQ_0\bw-\bw\|_T^2+\|\nabla (\bQ_0\bw-\bw)\|_T^2)\right)^{1/2} |\bv|_{1,h}\\
&\le& Ch^t\|\bw\|_{t+1} |\bv|_{1,h}.
\end{eqnarray*}
Similarly, we have from (\ref{trace}) and (\ref{Rh}) that
\begin{eqnarray*}
\left|l_1(\bv,\
\bw)\right|&\equiv&\left|\sum_{T\in\T_h}\l(I-\Q_h)\curl\bw,\
\nu(\bv_b-\bv_0)\times\bn
\r_\pT\right|\\
&\le&
\left(\sum_{T\in\T_h}h\|(I-\Q_h)\curl\bw\|_\pT^2\right)^{1/2}|\bv|_{1,h}\\
&\le& Ch^t\|\bw\|_{t+1} |\bv|_{1,h}.
\end{eqnarray*}
This completes the proof of (\ref{new-mmm1}).

As to (\ref{new-mmm888}), note that
\begin{eqnarray*}
|s_2(Q_h p,\ q)|&=&\left|\sum_{T\in\T_h} h\langle Q_0p-Q_b p,\;
q_0-q_b\rangle_\pT\right|\\
&\le &\sum_{T\in\T_h}h |\langle Q_0p- p,\; q_0-q_b\rangle_\pT|\\
&\le& Ch^t\|p\|_{t}\ |q|_{0,h}.
\end{eqnarray*}
It follows from (\ref{trace}) and (\ref{Lh}) that
\begin{eqnarray*}
|l_2(\bw,\ q)|&=&\left|\sum_{T\in\T_h}\langle q_0-q_b,\ (\bw-\bQ_0\bw)\cdot\bn\rangle_\pT\right|\\
&\le& \left(\sum_{T\in\T_h}h^{-1}\|\bw-\bQ_0\bw\|_\pT^2\right)^{1/2}\left(\sum_{T\in\T_h}h\|q_0-q_b\|^2_{\pT}\right)^{1/2}\\
&\le& Ch^t\|\bw\|_{t+1}\ |q|_{0,h}.
\end{eqnarray*}
Combining the above two estimates leads to the inequality
(\ref{new-mmm888}). This completes the proof of the lemma.
\end{proof}

\section{Error Estimates}\label{Section:H1ErrorEstimates}

The objective of this section is to establish some optimal order
error estimates for $\bu_h$ and $p_h$ in certain discrete norms. We
start with a modified {\em inf-sup} condition commonly used for
analyzing saddle point problem.

\smallskip
\begin{lemma}\label{Lemma:inf-sup}
For any $q=\{q_0,q_b\}\in W_h$, there exist a $\bv_q=h^2\{\nabla
q_0, 0\}\in V_{h,0}$ such that
\begin{equation}\label{inf-sup}
b(\bv_q,q)\ge  h^2 \|\nabla q\|_{0,h}^2-C |q|_{0,h}^2
\end{equation}
and
\begin{equation}\label{inf-sup-boundedness}
\3bar \bv_q\3bar \leq C h \|\nabla q\|_{0,h},
\end{equation}
where $C$ is a constant independent of $h$.
\end{lemma}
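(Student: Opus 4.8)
The plan is to verify directly that the candidate $\bv_q = h^2\{\nabla q_0, 0\}$ does the job, so no construction is needed and everything reduces to two estimates. First I would check $\bv_q \in V_{h,0}$: since $q_0\in P_{k-1}(T)$ its gradient lies in $[P_{k-2}(T)]^3\subset[P_k(T)]^3$, so $\bv_0=h^2\nabla q_0$ is admissible, while the boundary component vanishes identically and in particular $\bv_b\times\bn=0$ on $\partial\Omega$. The heart of the argument is an exact identity for $b(\bv_q,q)$. Using $\bv_0$ as a test function in the definition (\ref{d-g}) of $\nabla_w q$ (legitimate because $\bv_0\in[P_{k-1}(T)]^3$) and then integrating by parts on each $T$, I expect
\[
(\bv_0,\ \nabla_w q)_T = (\nabla q_0,\ \bv_0)_T - \l q_0-q_b,\ \bv_0\cdot\bn\r_\pT .
\]
Substituting $\bv_0=h^2\nabla q_0$ and summing over $T\in\T_h$ yields
\[
b(\bv_q,\ q) = h^2\|\nabla q\|_{0,h}^2 - h^2\sum_{T\in\T_h}\l q_0-q_b,\ \nabla q_0\cdot\bn\r_\pT ,
\]
so the leading term is exactly the desired $h^2\|\nabla q\|_{0,h}^2$ and the whole matter reduces to controlling the boundary remainder.

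For the remainder I would apply Cauchy--Schwarz on each face and then the polynomial trace inequality (\ref{trace-poly}) to replace $\|\nabla q_0\|_\pT$ by $Ch^{-1/2}\|\nabla q_0\|_T$. Regrouping the powers of $h$ so that the factors $h^{1/2}\|q_0-q_b\|_\pT$ and $\|\nabla q_0\|_T$ appear, a discrete Cauchy--Schwarz sum gives the bound $Ch\,|q|_{0,h}\,\|\nabla q\|_{0,h}$ for the remainder. A final use of Young's inequality then splits this between a portion of $h^2\|\nabla q\|_{0,h}^2$ and a $C|q|_{0,h}^2$ term, producing the lower bound (\ref{inf-sup}).

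The boundedness estimate (\ref{inf-sup-boundedness}) is obtained by treating the two contributions to $\3bar\bv_q\3bar^2$ separately. The stabilization part equals $h^4\sum_{T}h^{-1}\|\nabla q_0\times\bn\|_\pT^2$, which (\ref{trace-poly}) bounds by $Ch^2\|\nabla q\|_{0,h}^2$. For the discrete weak curl, since $\bv_b=0$ and $\curl\nabla q_0=0$, testing the definition (\ref{d-c}) with $\varphi=\cw\bv_q$ and integrating by parts leaves only a boundary term of the form $h^2\l\nabla q_0\times\bn,\ \cw\bv_q\r_\pT$; applying (\ref{trace-poly}) to both $\cw\bv_q$ and $\nabla q_0$ and cancelling one factor of $\|\cw\bv_q\|_T$ gives $\|\cw\bv_q\|_T\le Ch\|\nabla q_0\|_T$, whence $\sum_T\nu\|\cw\bv_q\|_T^2\le Ch^2\|\nabla q\|_{0,h}^2$. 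Adding the two bounds and taking square roots yields (\ref{inf-sup-boundedness}).

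I expect the main obstacle to be the remainder estimate in the first part: one must track the powers of $h$ carefully so that, after the polynomial trace inequality, the surviving quantities are exactly $|q|_{0,h}$ and $\|\nabla q\|_{0,h}$, and so that Young's inequality still leaves a positive multiple of $h^2\|\nabla q\|_{0,h}^2$ on the right-hand side. The curl and stabilization estimates are routine once (\ref{trace-poly}) and the identity $\curl\nabla q_0=0$ are in hand.
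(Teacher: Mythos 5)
Your proposal is correct and follows essentially the same route as the paper: the same test function $\bv_q$, the same integration-by-parts identity for $b(\bv_q,q)$, and the same combination of Cauchy--Schwarz, the polynomial trace inequality (\ref{trace-poly}), and Young's inequality to absorb the boundary remainder (the paper sidesteps the loss of a factor $\tfrac12$ in front of $h^2\|\nabla q\|_{0,h}^2$ by taking $\bv_0=2h^2\nabla q_0$ in its proof, a harmless constant discrepancy with the lemma's own statement of $\bv_q$). Your treatment of (\ref{inf-sup-boundedness}) correctly supplies the details (vanishing of $\curl\nabla q_0$, the boundary-only representation of $\cw\bv_q$, and (\ref{trace-poly})) that the paper merely asserts as a direct computation.
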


\begin{proof}
For a given $q=\{q_0,q_b\}\in W_h$ and $\bv=\{\bv_0,\bv_b\}\in
V_{h,0}$, from the definition of the discrete weak gradient we
obtain
\begin{equation*}
\begin{split}
b(\bv,q) &= \sum_{T\in\T_h}(\bv_0, \nabla_w q)_T \\
&= \sum_{T\in\T_h} \left(\l\bv_0\cdot\bn, q_b \r_{\pT}-
(\nabla\cdot\bv_0, q_0)_T\right)\\
&= \sum_{T\in\T_h} \left( (\bv_0, \nabla q_0)_T +\l\bv_0\cdot\bn,
q_b-q_0\r_{\pT}\right),
\end{split}
\end{equation*}
where we have used the usual integration by parts in the last
equation. By choosing $\bv_0=2h^2\nabla q_0$ and $\bv_b=0$ we arrive
at
\begin{equation*}
b(\bv,q) = 2h^2\sum_{T\in\T_h} (\nabla q_0, \nabla q_0)_T
+2h^2\sum_{T\in\T_h} \l\nabla q_0\cdot\bn, q_b-q_0\r_{\pT}.
\end{equation*}
Now by the Cauchy-Schwarz inequality and the trace inequality
(\ref{trace-poly}) we obtain
\begin{equation*}
\begin{split}
b(\bv,q) & \ge  2h^2\sum_{T\in\T_h} (\nabla q_0, \nabla q_0)_T
-2h^2\sum_{T\in\T_h} \|\nabla q_0\cdot\bn\|_{\pT}
\|q_b-q_0\|_{\pT}\\
&\ge 2h^2\sum_{T\in\T_h} (\nabla q_0, \nabla q_0)_T - Ch^{1.5}
\sum_{T\in\T_h} \|\nabla q_0\|_{T} \|q_b-q_0\|_{\pT}\\
&\ge h^2\sum_{T\in\T_h} (\nabla q_0, \nabla q_0)_T - Ch
\sum_{T\in\T_h} \|q_b-q_0\|_{\pT}^2,
\end{split}
\end{equation*}
which gives rise to the inequality (\ref{inf-sup}). The boundedness
estimate (\ref{inf-sup-boundedness}) can be obtain by computing the
triple bar norm of $\bv_q$ directly. This completes the proof of the
lemma.
\end{proof}
\smallskip

The following is an error estimate for the WG finite element
solutions.

\begin{theorem}\label{h1-bd}
Let $(\bu; p)\in [H^{t+1}(\Omega)]^3\times [H_0^1(\Omega)\cap
H^{\max\{1,t\}}(\Omega)]$ with $\frac12< t\le k$ and $(\bu_h;p_h)\in
V_h\times W_h$ be the solution of (\ref{moment1})-(\ref{bc1}) and
(\ref{wg1})-(\ref{wg2}) respectively. Then
\begin{eqnarray}
\3bar\be_h\3bar+|\epsilon_h|_{0,h}&\le&
Ch^t(\|\bu\|_{t+1}+\|p\|_t),\label{err1}\\
h \|\nabla\epsilon_h\|_{0,h}&\leq&Ch^t(\|\bu\|_{t+1}+\|p\|_t).
\label{err1-secondpart}
\end{eqnarray}
\end{theorem}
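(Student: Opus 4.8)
The plan is to follow the standard two-step argument for saddle-point problems, based entirely on the error equations (\ref{ee1})--(\ref{ee2}) together with the estimates already recorded in Lemmas \ref{Lemma:myestimates} and \ref{Lemma:inf-sup}. The energy estimate (\ref{err1}) comes first, and the pressure-gradient bound (\ref{err1-secondpart}) is recovered afterwards through the inf-sup test function.

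For (\ref{err1}) I would argue by energy. Taking $\bv=\be_h$ in (\ref{ee1}) and $q=\epsilon_h$ in (\ref{ee2}) and adding the two equations, the coupling terms $b(\be_h,\epsilon_h)$ cancel, leaving
\[
a(\be_h,\be_h)+s_2(\epsilon_h,\epsilon_h)=\varphi_\bu(\be_h)+\phi_{\bu,p}(\epsilon_h).
\]
By definition $a(\be_h,\be_h)=\3bar\be_h\3bar^2$ and $s_2(\epsilon_h,\epsilon_h)=|\epsilon_h|_{0,h}^2$. For the right-hand side I would invoke Lemma \ref{Lemma:myestimates}, using $|\be_h|_{1,h}\le\3bar\be_h\3bar$, to obtain $|\varphi_\bu(\be_h)|\le Ch^t\|\bu\|_{t+1}\3bar\be_h\3bar$ and $|\phi_{\bu,p}(\epsilon_h)|\le Ch^t(\|\bu\|_{t+1}+\|p\|_t)\,|\epsilon_h|_{0,h}$. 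A Cauchy--Schwarz/Young step then absorbs the factors $\3bar\be_h\3bar$ and $|\epsilon_h|_{0,h}$ into the left-hand side, yielding (\ref{err1}).

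The second bound (\ref{err1-secondpart}) is where the pressure gradient must be recovered, since the energy identity controls only the jump seminorm $|\epsilon_h|_{0,h}$ and gives no handle on $\|\nabla\epsilon_h\|_{0,h}$. Here I would use the modified inf-sup function $\bv_{\epsilon_h}=h^2\{\nabla\epsilon_0,0\}\in V_{h,0}$ of Lemma \ref{Lemma:inf-sup}. Testing (\ref{ee1}) with $\bv=\bv_{\epsilon_h}$ gives $b(\bv_{\epsilon_h},\epsilon_h)=a(\be_h,\bv_{\epsilon_h})-\varphi_\bu(\bv_{\epsilon_h})$, and combining with the lower bound (\ref{inf-sup}) produces
\[
h^2\|\nabla\epsilon_h\|_{0,h}^2\le a(\be_h,\bv_{\epsilon_h})-\varphi_\bu(\bv_{\epsilon_h})+C|\epsilon_h|_{0,h}^2.
\]
I would then bound $a(\be_h,\bv_{\epsilon_h})\le\3bar\be_h\3bar\,\3bar\bv_{\epsilon_h}\3bar$ by Cauchy--Schwarz for the form $a$, and $|\varphi_\bu(\bv_{\epsilon_h})|\le Ch^t\|\bu\|_{t+1}\3bar\bv_{\epsilon_h}\3bar$ via Lemma \ref{Lemma:myestimates}, using the boundedness estimate (\ref{inf-sup-boundedness}) to replace $\3bar\bv_{\epsilon_h}\3bar$ by $Ch\|\nabla\epsilon_h\|_{0,h}$. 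Writing $X=h\|\nabla\epsilon_h\|_{0,h}$, every term on the right is either $O(X)$ or already controlled, so a further Young step absorbs the $X$-factors and leaves $X^2\le C(\3bar\be_h\3bar^2+h^{2t}\|\bu\|_{t+1}^2+|\epsilon_h|_{0,h}^2)$; inserting (\ref{err1}) finishes (\ref{err1-secondpart}).

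The main obstacle is conceptual rather than computational: because the stabilization $s_2$ penalizes only the interelement jump of the pressure, the energy identity alone cannot see $\|\nabla\epsilon_h\|_{0,h}$. Everything hinges on the inf-sup construction of Lemma \ref{Lemma:inf-sup}, and in particular on the scaling $h^2\{\nabla\epsilon_0,0\}$ carried by the test function: this scaling is precisely what makes (\ref{inf-sup-boundedness}) deliver $\3bar\bv_{\epsilon_h}\3bar=O(h\|\nabla\epsilon_h\|_{0,h})$, so the pressure-gradient contribution reappears with the correct power of $h$ and can be absorbed. One must also be careful that the cross term $a(\be_h,\bv_{\epsilon_h})$ feeds back only the already-estimated quantity $\3bar\be_h\3bar$, which is exactly why (\ref{err1}) has to be established before (\ref{err1-secondpart}).
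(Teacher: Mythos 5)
Your proposal is correct and follows essentially the same route as the paper: the same energy identity obtained by testing (\ref{ee1}) with $\be_h$ and (\ref{ee2}) with $\epsilon_h$, bounded via Lemma \ref{Lemma:myestimates}, followed by the same recovery of $h\|\nabla\epsilon_h\|_{0,h}$ through the inf-sup test function $h^2\{\nabla\epsilon_0,0\}$ of Lemma \ref{Lemma:inf-sup} and the boundedness estimate (\ref{inf-sup-boundedness}). Your remark on why the $h^2$ scaling of the test function is what makes the absorption work is a correct reading of the mechanism, and your writing $\nabla\epsilon_0$ rather than the paper's slightly loose $\nabla\epsilon_h$ inside the test function is in fact the more precise notation.
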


\smallskip

\begin{proof}
By letting $\bv=\be_h$ in (\ref{ee1}) and $q=\epsilon_h$ in
(\ref{ee2}) and adding the two resulting equations, we have
\begin{eqnarray}
\3bar\be_h\3bar^2+
|\epsilon_h|_{0,h}^2&=&\varphi_\bu(\be_h)+\phi_{\bu,p}(\epsilon_h).
\label{main}
\end{eqnarray}
The right-hand side of (\ref{main}) can be handled by using Lemma
\ref{Lemma:myestimates} as follows. Using (\ref{new-mmm1}) with
$\bw$ and $\bv$ replaced by $\bu$ and $\be_h$ we obtain
\begin{equation}\label{varphi-estimate}
|\varphi_{\bu}(\be_h)|\leq C h^t \|\bu\|_{t+1} \3bar\be_h\3bar.
\end{equation}
Similarly, using (\ref{new-mmm888}) with $\bw$ and $q$ replaced by
$\bu$ and $\epsilon_h$ we obtain
\begin{equation}\label{phi-estimate}
|\phi_{\bu,p}(\epsilon_h)|\leq C h^t (\|\bu\|_{t+1}+\|p\|_t) \
|\epsilon_h|_{0,h}.
\end{equation}
Substituting (\ref{varphi-estimate}) and (\ref{phi-estimate}) into
(\ref{main}) yields
\begin{equation}\label{b-u}
\3bar \be_h\3bar^2+|\epsilon_h|_{0,h}^2 \le
Ch^t(\|\bu\|_{t+1}+\|p\|_t)(\3bar \be_h\3bar+ |\epsilon_h|_{0,h}),
\end{equation}
which implies the error estimate (\ref{err1}).

Next we will bound $\|\nabla\epsilon_h\|_{0,h}$. It follows from
(\ref{ee1}) that
\[
b(\bv,\ \epsilon_h)=a(\be_h,\ \bv)-\varphi_\bu(\bv)\qquad \forall
\bv\in V_{h,0}.
\]
From Lemma \ref{Lemma:inf-sup}, by choosing
$\bv=\bv_{\epsilon_h}=h^2\{\nabla \epsilon_h, 0\}$ we come up with
\begin{equation}\label{raining.100}
\begin{split}
h^2\|\nabla\epsilon_h\|_{0,h}^2&\leq |b(\bv_{\epsilon_h},
\epsilon_h)| + C |\epsilon_h|_{0,h}^2\\
&\leq |a(\be_h,\ \bv_{\epsilon_h})| +
|\varphi_\bu(\bv_{\epsilon_h})|+C |\epsilon_h|_{0,h}^2\\
&\leq \3bar \be_h\3bar\ \3bar \bv_{\epsilon_h}\3bar +
Ch^t\|\bu\|_{t+1} \3bar\bv_{\epsilon_h}\3bar + C
 |\epsilon_h|_{0,h}^2\\
&\leq C(\3bar \be_h\3bar + Ch^t\|\bu\|_{t+1}) h
\|\nabla\epsilon_h\|_{0,h} + C |\epsilon_h|_{0,h}^2,
\end{split}
\end{equation}
where we have used the estimate (\ref{inf-sup-boundedness}) in the
last inequality. It follows from (\ref{raining.100}) and
(\ref{err1}) that (\ref{err1-secondpart}) holds true. This completes
the proof of the theorem.
\end{proof}

\medskip
Recall that $\3bar\bv\3bar$ is merely a semi-norm in the finite
element space $V_{h,0}$. Thus, the error estimate (\ref{err1}) only
provides a partial answer to the convergence of the WG finite
element method, particularly for the vector component $\bu_h$. The
norm $\3bar\cdot\3bar_1$, as defined by (\ref{3bar1nomr}), involves
two additional terms. The following theorem shall provide some
estimates with respect to those additional terms.

\medskip

\begin{theorem}\label{Theorem:divpart}
Let $(\bu; p)\in [H^{t+1}(\Omega)]^3\times (H_0^1(\Omega)\cap
H^{\max\{1,t\}}(\Omega))$ with $\frac12< t\le k$ and $(\bu_h;p_h)\in
V_h\times W_h$ be the solution of (\ref{moment1})-(\ref{bc1}) and
(\ref{wg1})-(\ref{wg2}) respectively. Then, we have
\begin{eqnarray}
\left(\sum_{e\in\E_h^0}h^{-1}\|\jump{\be_0\cdot\bn}\|_e^2
\right)^{\frac12} &\leq& Ch^t(\|\bu\|_{t+1}+\|p\|_t),\label{t-1}\\
\left(\sum_{T\in\T_h} \|\nabla\cdot\be_0\|_T^2 \right)^{\frac12} &\leq&
Ch^t(\|\bu\|_{t+1}+\|p\|_t).\label{t-2}
\end{eqnarray}
\end{theorem}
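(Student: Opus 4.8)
The plan is to obtain both bounds by testing the second error equation (\ref{ee2}) against carefully engineered functions $q\in W_h$, chosen so that the bilinear form $b(\be_h,q)$ reproduces exactly the quantity we wish to estimate. Throughout, abbreviate the two left-hand sides as $X=\big(\sum_{T\in\T_h}\|\nabla\cdot\be_0\|_T^2\big)^{1/2}$ and $Y=\big(\sum_{e\in\E_h^0}h^{-1}\|\jump{\be_0\cdot\bn}\|_e^2\big)^{1/2}$. The algebraic fact underlying both arguments is the integration-by-parts identity $b(\bv,q)=\sum_{T\in\T_h}\big(-(q_0,\nabla\cdot\bv_0)_T+\langle q_b,\bv_0\cdot\bn\rangle_\pT\big)$ for $\bv\in V_{h,0}$, $q\in W_h$, which follows directly from the definition (\ref{d-g}) of $\nabla_w q$ upon using $\bv_0$ as an admissible test function; this is the same manipulation already carried out in (\ref{eu1}) and in the proof of Lemma \ref{Lemma:inf-sup}.

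For (\ref{t-2}) I would take $q=\{q_0,q_b\}=\{\nabla\cdot\be_0,\,0\}$, which lies in $W_h$ since $\nabla\cdot\be_0\in P_{k-1}(T)$ and $q_b\equiv 0$. The identity above then collapses to $b(\be_h,q)=-\sum_{T\in\T_h}\|\nabla\cdot\be_0\|_T^2=-X^2$, so (\ref{ee2}) yields $X^2=s_2(\epsilon_h,q)-s_2(Q_hp,q)-l_2(\bu,q)$. The terms $s_2(Q_hp,q)$ and $l_2(\bu,q)$ are bounded directly by (\ref{new-mmm888}) as $Ch^t(\|\bu\|_{t+1}+\|p\|_t)\,|q|_{0,h}$, while $s_2(\epsilon_h,q)$ is controlled by Cauchy--Schwarz as $|\epsilon_h|_{0,h}\,|q|_{0,h}$ and then by (\ref{err1}) as $Ch^t(\|\bu\|_{t+1}+\|p\|_t)\,|q|_{0,h}$. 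Since $|q|_{0,h}^2=\sum_{T\in\T_h}h\|\nabla\cdot\be_0\|_\pT^2\le C\sum_{T\in\T_h}\|\nabla\cdot\be_0\|_T^2=CX^2$ by the inverse trace inequality (\ref{trace-poly}), every term on the right is at most $Ch^t(\|\bu\|_{t+1}+\|p\|_t)\,X$, and dividing by $X$ gives (\ref{t-2}).

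For (\ref{t-1}) I would instead take $q=\{0,q_b\}$ with $q_b|_e=h^{-1}\jump{\be_0\cdot\bn}$ on each interior face $e\in\E_h^0$ and $q_b=0$ on $\partial\Omega$; this is admissible because $\be_0\cdot\bn|_e\in P_k(e)$. With $q_0=0$ the identity reduces to $b(\be_h,q)=\sum_{T\in\T_h}\langle q_b,\be_0\cdot\bn\rangle_\pT$, and summing element by element the single-valued $q_b$ telescopes the two one-sided normal traces on each interior face into the jump, so that $b(\be_h,q)=\sum_{e\in\E_h^0}\langle q_b,\jump{\be_0\cdot\bn}\rangle_e=Y^2$ (boundary faces drop out since $q_b=0$ there). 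One checks $|q|_{0,h}^2=\sum_{T\in\T_h}h\|q_b\|_\pT^2\le CY^2$, so that (\ref{ee2}) together with (\ref{new-mmm888}), Cauchy--Schwarz, and (\ref{err1}) bounds the right-hand side $s_2(Q_hp,q)+l_2(\bu,q)-s_2(\epsilon_h,q)$ by $Ch^t(\|\bu\|_{t+1}+\|p\|_t)\,Y$; dividing by $Y$ yields (\ref{t-1}).

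The approximation estimates themselves are routine, being already packaged in Lemma \ref{Lemma:myestimates}. The step demanding the most care is the treatment of $s_2(\epsilon_h,q)$: it involves the unknown error $\epsilon_h$ rather than a projection, so it is \emph{not} covered by Lemma \ref{Lemma:myestimates} and must be handled by Cauchy--Schwarz in combination with the already-established bound (\ref{err1}) on $|\epsilon_h|_{0,h}$ from Theorem \ref{h1-bd}. Equally essential is the bookkeeping that converts $|q|_{0,h}$ back into $X$ (respectively $Y$) via the inverse trace inequality, so that the target quantity reappears on both sides and can be absorbed; keeping the orientation in the telescoping of $\jump{\be_0\cdot\bn}$ and the face-counting factor in $|q|_{0,h}$ consistent is the one place where sign conventions must be tracked carefully.
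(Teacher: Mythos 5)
Your proposal is correct and follows essentially the same route as the paper: testing the error equation (\ref{ee2}) with $q=\{\pm\nabla\cdot\be_0,0\}$ and $q=\{0,h^{-1}\jump{\be_0\cdot\bn}\}$, bounding $\phi_{\bu,p}(q)$ via Lemma \ref{Lemma:myestimates} and $s_2(\epsilon_h,q)$ via Cauchy--Schwarz with (\ref{err1}), and converting $|q|_{0,h}$ back to the target quantity by the trace/inverse inequality. The only differences from the paper's proof are the immaterial sign on the test function for (\ref{t-2}) and the order in which the two estimates are treated.
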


\begin{proof}
Using the error equation (\ref{ee2}) we have
\begin{equation}\label{normal}
b(\be_h,\ q)=\phi_{\bu,p}(q)-s_2(\epsilon_h,\ q).
\end{equation}
The definition of the weak gradient implies that
\begin{eqnarray}
b(\be_h,q) &=&  (\be_0, \nabla_w q)
=\sum_{T\in\T_h} \left(\l\be_0\cdot\bn,\ q_b\r_\pT - (\nabla\cdot\be_0,\ q_0
)_T \right).\label{t0}
\end{eqnarray}
By letting $q=q_{\be_h}=\{0,h^{-1}\jump{\be_0\cdot\bn}\}$ on the
interior edges, we obtain
\begin{equation*}
b(\be_h,q_{\be_h})=\sum_{e\in\E_h^0}h^{-1}\|\jump{\be_0\cdot\bn}\|_e^2.
\end{equation*}
Thus,
\begin{equation*}\label{t1}
\sum_{e\in\E_h^0}h^{-1}\|\jump{\be_0\cdot\bn}\|_e^2=\phi_{\bu,p}(q_{\be_h})-s_2(\epsilon_h,\
q_{\be_h}).
\end{equation*}
It follows from (\ref{err1}) that
\begin{equation}\label{t3}
|s_2(\epsilon_h,\ q_{\be_h})|\le | \epsilon_h|_{0,h}\ |
q_{\be_h}|_{0,h}\le Ch^t(\|\bu\|_{t+1}+\|p\|_t)\ |q_{\be_h}|_{0,h},
\end{equation}
and from (\ref{new-mmm888})
\begin{equation}\label{t3-new}
|\phi_{\bu,p}(q_{\be_h})|\le Ch^t(\|\bu\|_{t+1}+\|p\|_t)\ |
q_{\be_h}|_{0,h}.
\end{equation}
Also, it is easy to see that
\begin{equation*}
\begin{split}
|q_{\be_h}|_{0,h}^2&=\sum_{T\in\T_h}h\|q_0-q_b\|_{\pT\cap\Omega}^2\\
&=\sum_{T\in\T_h}h^{-1}\|\jump{\be_0\cdot\bn}\|_{\pT\cap\Omega}^2\\
&\le C\sum_{e\in\E_h^0}h^{-1}\|\jump{\be_0\cdot\bn}\|_e^2.
\end{split}
\end{equation*}
Combining the above four inequalities yields
\begin{equation}\label{t100}
\left(\sum_{e\in\E_h^0}h^{-1}\|\jump{\be_0\cdot\bn}\|_e^2\right)^{\frac12}
\leq C h^t(\|\bu\|_{t+1}+\|p\|_t),
\end{equation}
which verifies the estimate (\ref{t-1}).

To derive (\ref{t-2}), we set $q=q_{\be_h}=\{-\nabla\cdot\be_0,\
0\}\in W_h$ in (\ref{t0}) so that
\begin{equation}\label{t6}
b(\be_h,q_{\be_h})=\sum_{T\in\T_h}\|\nabla\cdot\be_0\|_T^2.
\end{equation}
Thus, it follows from (\ref{normal}) that
\begin{equation}\label{t888}
\sum_{T\in\T_h}\|\nabla\cdot\be_0\|_T^2=\phi_{\bu,p}(q_{\be_h})-s_2(\epsilon_h,\
q_{\be_h}).
\end{equation}
Substituting (\ref{t3}) and (\ref{t3-new}) into (\ref{t888}) implies
\begin{equation}\label{t999}
\sum_{T\in\T_h}\|\nabla\cdot\be_0\|_T^2\le
Ch^t(\|\bu\|_{t+1}+\|p\|_t)\ |q_{\be_h}|_{0,h}.
\end{equation}
It follows from the definition of $|q|_{0,h}$ and the trace
inequality (\ref{trace-poly}) that
$$
|q_{\be_h}|_{0,h} \leq \left(\sum_{T\in\T_h}
h\|\nabla\cdot\be_0\|_\pT^2 \right)^{\frac12}\leq
C\left(\sum_{T\in\T_h} \|\nabla\cdot\be_0\|_T^2 \right)^{\frac12},
$$
which, together with (\ref{t999}), leads to the following estimate
$$
\left(\sum_{T\in\T_h} \|\nabla\cdot\be_0\|_T^2 \right)^{\frac12} \leq
Ch^t(\|\bu\|_{t+1}+\|p\|_t).
$$
This completes the proof.
\end{proof}

\medskip
To summarize, we have obtained the following error estimate for the
WG finite element solution arising from (\ref{wg1})-(\ref{wg2}).
\begin{theorem}\label{WG-ErrorEstimate}
Under the assumptions of Theorem \ref{h1-bd}, we have the following
error estimate for the WG finite element approximations:
\begin{eqnarray}
\3bar\be_h\3bar_1+\3bar\epsilon_h\3bar_{0}&\le&
Ch^t(\|\bu\|_{t+1}+\|p\|_t).\label{HONEerr}
\end{eqnarray}
\end{theorem}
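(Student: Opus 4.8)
The plan is to treat this as a packaging result: both composite norms on the left-hand side decompose into pieces that have each already been bounded individually by $Ch^t(\|\bu\|_{t+1}+\|p\|_t)$ in Theorems \ref{h1-bd} and \ref{Theorem:divpart}. So I would invoke the definition (\ref{3bar1nomr}) of $\3bar\cdot\3bar_1$ together with the formula $\3bar q\3bar_0 = |q|_{0,h} + h\|\nabla q\|_{0,h}$, and then simply add the constituent estimates using the triangle inequality.

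Concretely, by (\ref{3bar1nomr}) one has
\[
\3bar\be_h\3bar_1 = \3bar\be_h\3bar + \left(\sum_{T\in\T_h}\|\nabla\cdot\be_0\|_T^2\right)^{1/2} + \left(\sum_{e\in\E_h^0}h^{-1}\|\jump{\be_0\cdot\bn}\|_e^2\right)^{1/2},
\]
whose first summand is controlled by (\ref{err1}), second by (\ref{t-2}), and third by (\ref{t-1}). For the pressure error, since $\3bar\epsilon_h\3bar_0 = |\epsilon_h|_{0,h} + h\|\nabla\epsilon_h\|_{0,h}$, the first term is bounded by (\ref{err1}) and the second by (\ref{err1-secondpart}). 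Adding these five bounds and merging the finitely many constants into a single $C$ then delivers (\ref{HONEerr}).

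The only point requiring a moment of care is that each cited estimate was derived under precisely the regularity hypothesis assumed here, namely $(\bu;p)\in [H^{t+1}(\Omega)]^3\times(H_0^1(\Omega)\cap H^{\max\{1,t\}}(\Omega))$ with $\frac12<t\le k$, which is indeed the common hypothesis of Theorems \ref{h1-bd} and \ref{Theorem:divpart}, so the invocations are legitimate. I do not expect any genuine obstacle at this stage: the substantive work lies upstream, in the inf-sup mechanism of Lemma \ref{Lemma:inf-sup} that underlies the $\nabla\epsilon_h$ bound (\ref{err1-secondpart}) and in the testing choices $q=\{0,h^{-1}\jump{\be_0\cdot\bn}\}$ and $q=\{-\nabla\cdot\be_0,0\}$ used in Theorem \ref{Theorem:divpart}. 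This summary theorem merely collects those outputs into the full norm $\3bar\cdot\3bar_1$, which is a genuine norm on $V_{h,0}$ by the remark following (\ref{3bar1nomr}).
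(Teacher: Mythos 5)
Your proposal is correct and matches the paper exactly: the paper presents Theorem \ref{WG-ErrorEstimate} as a summary with no separate proof, precisely because it follows by decomposing $\3bar\cdot\3bar_1$ and $\3bar\cdot\3bar_0$ into their defining pieces and adding the bounds (\ref{err1}), (\ref{err1-secondpart}), (\ref{t-1}), and (\ref{t-2}). Your check that all cited results share the common regularity hypothesis is the only point of substance, and you have it right.
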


\section{An Error Estimate in $L^2$}\label{Section:L2ErrorEstimates}

To derive an $L^2$-error estimate for the WG approximation of the
vector component, we consider an auxiliary problem that seeks
$(\psi;\xi)$ satisfying
\begin{equation}\label{dual-problem}
\begin{split}
\curl(\nu \curl\psi)-\nabla \xi &=\be_0\quad \mbox{in}\;\Omega,\\
\nabla\cdot\psi&=0\quad\mbox{in}\;\Omega,\\
\psi\times\bn &= 0\quad\mbox{on}\;\partial\Omega,\\
\xi &= 0\quad\mbox{on}\;\partial\Omega.\\
\end{split}
\end{equation}
Assume that the problem (\ref{dual-problem}) has the
$[H^{1+s}(\Omega)]^3\times H^s(\Omega)$-regularity property in the
sense that the solution $(\psi; \xi)\in (H^{1+s}(\Omega))^3\times
H^s(\Omega)$ and the following a priori estimate holds true:
\begin{equation}\label{reg}
\|\psi\|_{1+s}+\|\xi\|_s\le C\|\be_0\|,
\end{equation}
where $0< s\le 1$.

\medskip
\begin{theorem} \label{L2-bd}
Let $(\bu; p)\in  [H^{r+1}(\Omega)]^3\times [H_0^1(\Omega)\cap
H^{\max\{1,r\}}(\Omega)]$ and $(\bu_h;p_h)\in V_h\times W_h$ be the
solutions of (\ref{moment1})-(\ref{bc1}) and (\ref{wg1})-(\ref{wg2})
respectively. Let $\frac12 <r\le k$ and $0 < s \le 1$. Then,
\begin{equation}\label{l2-error}
\|\bQ_0\bu-\bu_0\|\le Ch^{r+s}(\|\bu\|_{r+1}+\|p\|_r).
\end{equation}
\end{theorem}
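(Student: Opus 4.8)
The plan is to run a duality (Aubin--Nitsche) argument on the auxiliary problem (\ref{dual-problem}). Testing its first equation against $\be_0$ gives
\[
\|\be_0\|^2 = (\curl(\nu\curl\psi),\ \be_0) - (\nabla\xi,\ \be_0),
\]
and the entire proof amounts to rewriting each term on the right through the WG bilinear forms evaluated at the natural test functions $\bQ_h\psi\in V_{h,0}$ and $Q_h\xi\in W_h$, then invoking the error equations (\ref{ee1})--(\ref{ee2}). The memberships $\bQ_h\psi\in V_{h,0}$ and $Q_h\xi\in W_h$ follow from the boundary conditions $\psi\times\bn=0$ and $\xi=0$ on $\partial\Omega$ in (\ref{dual-problem}).

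For the curl term I would integrate by parts element-wise and use the commuting relation $\cw(\bQ_h\psi)=\Q_h(\curl\psi)$ of Lemma \ref{lem-0}, together with the orthogonality $\nu\,\curl\be_0\perp(I-\Q_h)\curl\psi$ (valid since $\nu$ is piecewise constant, so $\nu\,\curl\be_0\in[P_{k-1}(T)]^3$). The interface and boundary terms carrying $\be_b\times\bn$ cancel, because $\nu\curl\psi\in H(\curl;\Omega)$ has single-valued tangential trace while $\be_b\times\bn$ is single-valued across interior faces and vanishes on $\partial\Omega$; what survives is exactly an $l_1$-type residual, giving
\[
(\curl(\nu\curl\psi),\ \be_0) = (\nu\cw\be_h,\ \curl\psi)_h + l_1(\psi,\be_h)
  = a(\be_h,\bQ_h\psi) - s_1(\be_h,\bQ_h\psi) + l_1(\psi,\be_h).
\]
Applying (\ref{ee1}) with $\bv=\bQ_h\psi$ replaces $a(\be_h,\bQ_h\psi)$ by $\varphi_\bu(\bQ_h\psi)+b(\bQ_h\psi,\epsilon_h)$. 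For the gradient term, the commuting relation $\nabla_w(Q_h\xi)=\bQ_0(\nabla\xi)$ and $\bQ_0\be_0=\be_0$ give $(\nabla\xi,\be_0)=b(\be_h,Q_h\xi)$, which by (\ref{ee2}) equals $\phi_{\bu,p}(Q_h\xi)-s_2(\epsilon_h,Q_h\xi)$; a parallel integration by parts, using $\nabla\cdot\psi=0$ and the normal-trace continuity of $\psi\in H(\mathrm{div};\Omega)$, identifies $b(\bQ_h\psi,\epsilon_h)=l_2(\psi,\epsilon_h)$. Collecting these identities expresses $\|\be_0\|^2$ as a sum of six terms: $\varphi_\bu(\bQ_h\psi)$, $l_2(\psi,\epsilon_h)$, $s_1(\be_h,\bQ_h\psi)$, $l_1(\psi,\be_h)$, $\phi_{\bu,p}(Q_h\xi)$, and $s_2(\epsilon_h,Q_h\xi)$.

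Each term is then bounded by $Ch^{r+s}(\|\bu\|_{r+1}+\|p\|_r)\|\be_0\|$. The factor $h^r$ always comes from the first-order estimates already proved: (\ref{err1}) for $\3bar\be_h\3bar$ and $|\epsilon_h|_{0,h}$, and Lemma \ref{Lemma:myestimates} for $\varphi_\bu$ and $\phi_{\bu,p}$ (with $t=r$, $\bw=\bu$). The extra factor $h^s$ is squeezed out of the dual solution via the approximation bounds $|\bQ_h\psi|_{1,h}\le Ch^s\|\psi\|_{1+s}$, $(\sum_{T\in\T_h} h\|(I-\Q_h)\curl\psi\|_\pT^2)^{1/2}\le Ch^s\|\psi\|_{1+s}$, and $|Q_h\xi|_{0,h}\le Ch^s\|\xi\|_s$, all obtained from Lemma \ref{lem-1} and the trace inequality (\ref{trace}). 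Finally the regularity estimate (\ref{reg}), $\|\psi\|_{1+s}+\|\xi\|_s\le C\|\be_0\|$, turns the assembled inequality into $\|\be_0\|^2\le Ch^{r+s}(\|\bu\|_{r+1}+\|p\|_r)\|\be_0\|$, which yields (\ref{l2-error}) after dividing by $\|\be_0\|$.

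I expect the main obstacle to be the bookkeeping in the curl identity: decomposing $\curl\psi=\Q_h(\curl\psi)+(I-\Q_h)\curl\psi$ and verifying that the many interface contributions collapse into the single residual $l_1(\psi,\be_h)$ rather than leaving uncontrolled boundary terms---this is precisely where the $H(\curl)$ tangential continuity of $\nu\curl\psi$ and the single-valuedness and boundary vanishing of $\be_b\times\bn$ are indispensable. A secondary technical point is establishing $|Q_h\xi|_{0,h}\le Ch^s\|\xi\|_s$ for small $s$, where the plain trace inequality (\ref{trace}) does not apply directly and a fractional trace or interpolation estimate is needed.
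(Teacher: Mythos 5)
Your proposal is correct and follows essentially the same duality argument as the paper: the same identity reducing $\|\be_0\|^2$ to the residual functionals $\varphi_\bu(\bQ_h\psi)$, $\phi_{\bu,p}(Q_h\xi)$, $\varphi_\psi(\be_h)$ and $\phi_{\psi,\xi}(\epsilon_h)$ (you merely keep the six constituent terms ungrouped rather than regrouping them), followed by Lemma \ref{Lemma:myestimates}, the first-order estimate (\ref{err1}), and the regularity bound (\ref{reg}). Your closing remark about the trace inequality for $s\le\tfrac12$ is a fair observation that the paper itself glosses over when applying Lemma \ref{Lemma:myestimates} with $t=s$.
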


\begin{proof}
Testing the first equation of (\ref{dual-problem}) by $\be_0$ gives
\[
\|\bQ_0\bu-\bu_0\|^2=(\be_0,\ \be_0)=(\curl(\nu\curl\psi),\
\be_0)-(\nabla\xi,\ \be_0).
\]
Using (\ref{m2}) and (\ref{m4}) (with $\psi, \xi, \be_h$ in the
place of $\bu, p, \bv$ respectively), the above equation becomes
\begin{eqnarray*}
\|\bQ_0\bu-\bu_0\|^2&=&(\nu\cw\bQ_h\psi,\ \cw\be_h)_h-(\be_0,\
\nabla_w(Q_h\xi))_h+l_1(\psi,\ \be_h).
\end{eqnarray*}
Adding and subtracting $s_1(\bQ_h\psi,\ \be_h)$ to the equation
above yields
\begin{eqnarray*}
\|\bQ_0\bu-\bu_0\|^2&=&a(\bQ_h \psi,\ \be_h)-b(\be_h,\
Q_h\xi)-\varphi_\psi(\be_h).
\end{eqnarray*}
The error equation (\ref{ee2}) implies
\[
b(\be_h,\ Q_h\xi)=-s_2(\epsilon_h,\ Q_h\xi)+\phi_{\bu,p}(Q_h\xi).
\]
It now follows from the definition of $\bQ_0$, $\nabla_w$ and the
second equation of (\ref{dual-problem}) that
\begin{eqnarray*}
b(\bQ_h\psi,\ \epsilon_h)&=&(\bQ_0\psi, \nabla_w\epsilon_h)_h=\sum_{T\in\T_h}(\l \epsilon_b,\ \bQ_0\psi\cdot\bn\r_\pT-(\epsilon_0,\ \nabla\cdot(\bQ_0\psi))_T)\\
&=&\sum_{T\in\T_h}(\l \epsilon_b-\epsilon_0,\ \bQ_0\psi\cdot\bn\r_\pT+(\nabla\epsilon_0,\ \psi)_T)\\
&=&\sum_{T\in\T_h}(\l \epsilon_b-\epsilon_0,\ \bQ_0\psi\cdot\bn\r_\pT-\l \epsilon_b-\epsilon_0,\ \psi\cdot\bn\r_\pT)\\
&=&l_2(\psi,\epsilon_h),
\end{eqnarray*}
where we have used the fact that $\sum_{T\in\T_h}\l \epsilon_b,\ \psi\cdot\bn\r_\pT=0$.
Using the equations above, we have
\begin{eqnarray*}
\|\bQ_0\bu-\bu_0\|^2&=&a(\bQ_h\psi,\ \be_h)-b(\bQ_h\psi,\
\epsilon_h)
-\phi_{\bu,p}(Q_h\xi)-\varphi_\psi(\be_h)+\phi_{\psi,\xi}(\epsilon_h).
\end{eqnarray*}
Using (\ref{ee1}) and the equation above, we have
\begin{eqnarray} \label{d1}
\|\bQ_0\bu-\bu_0\|^2=\varphi_\bu(\bQ_h\psi)-\phi_{\bu,p}(Q_h\xi)-\varphi_\psi(\be_h)+\phi_{\psi,\xi}(\epsilon_h).
\end{eqnarray}

The four terms on the right-hand side of (\ref{d1}) can be handled
by the estimates presented in Lemma \ref{Lemma:myestimates}. To this
end, we use (\ref{new-mmm1}) and (\ref{new-mmm888}) with $t=r$ to
obtain
\begin{equation}\label{EQN:09-21-001}
|\varphi_\bu(\bQ_h\psi)-\phi_{\bu,p}(Q_h\xi)|\leq C
h^r(\|\bu\|_{r+1}+\|p\|_r) \left( |\bQ_h\psi|_{1,h} + |Q_h\xi|_{0,h}
\right).
\end{equation}
Using the definition (\ref{Semi-Norm-v1h}) we have
\begin{equation} \label{d3}
\begin{split}
|\bQ_h\psi|_{1,h}^2=&\sum_{T\in\T_h} h^{-1}\|(\bQ_0\psi-Q_b\psi)\times\bn\|^2_\pT\\
\le& \sum_{T\in\T_h} h^{-1}\|(\bQ_0\psi-\psi)\times\bn\|_\pT^2\\
\le& Ch^{2s}\|\psi\|_{s+1}^2.\end{split}
\end{equation}
Similarly, we have from the definition of $Q_b$, (\ref{trace}) and
(\ref{Lh})
\begin{equation}\label{d4}
\begin{split}
|Q_h\xi|_{0,h}^2&=\sum_{T\in\T_h} h\|Q_0\xi-Q_b\xi\|^2_\pT\\
&\le \sum_{T\in\T_h} h\|Q_0\xi-\xi\|_\pT^2\\
&\le Ch^{2s}\|\xi\|_{s}^2.
\end{split}
\end{equation}
Substituting (\ref{d3}) and (\ref{d4}) into (\ref{EQN:09-21-001})
gives
\begin{equation}\label{EQN:09-21-002}
\begin{split}
|\varphi_\bu(\bQ_h\psi)-\phi_{\bu,p}(Q_h\xi)|&\leq C
h^{r+s}(\|\bu\|_{r+1}+\|p\|_r) \left( \|\psi\|_{1+s} + \|\xi\|_s
\right)\\
&\leq C h^{r+s}(\|\bu\|_{r+1}+\|p\|_r) \|\be_0\|,
\end{split}
\end{equation}
where the regularity estimate (\ref{reg}) was used in the second
equation.

Analogously, we have from (\ref{new-mmm1}) and (\ref{new-mmm888})
with $t=s$ that
\begin{equation}\label{EQN:09-21-005}
\begin{split}
|\varphi_\psi(\be_h)-\phi_{\psi,\xi}(\epsilon_h)| &\leq C
h^s(\|\psi\|_{s+1}+\|\xi\|_s) \left( |\be_h|_{1,h} + |
\epsilon_h|_{0,h} \right)\\
&\leq C h^s \left( \3bar\be_h\3bar + | \epsilon_h|_{0,h} \right)
\|\be_0\|\\
&\le C h^{r+s}(\|\bu\|_{r+1}+\|p\|_r) \|\be_0\|,
\end{split}
\end{equation}
where we have used the error estimate (\ref{err1}) and the
regularity inequality (\ref{reg}). Finally, substituting
(\ref{EQN:09-21-002}) and (\ref{EQN:09-21-005}) into (\ref{d1})
yields the desired error estimate (\ref{l2-error}). This completes
the proof of the theorem.
\end{proof}

\section{An Effective Implementation through Variable Reduction}\label{Section:Schur}
The degree of freedoms for the WG formulation
(\ref{wg1})-(\ref{wg2}) is associated with $\bu_h=\{\bu_0,\bu_b\}$
and $p_h=\{p_0,p_b\}$. In this section, we will demonstrate how
$\bu_0$ and $p_0$ can be eliminated from the system in order to
obtain a global system that depends only on $\bu_b$ and $p_b$. With
such a variable reduction, the number of unknowns of the WG method
is reduced significantly for an efficient practical implementation.

Let $\bu_h=\{\bu_0,\bu_b\}\in V_h$ and $p_h=\{p_0,p_b\}\in W_h$ be
the solution of the WG method (\ref{wg1})-(\ref{wg2}). Recall that
$(\bu_h;p_h)$ satisfies $\bu_b\times\bn=Q_b\phi$ on $\partial\Omega$
and the following equations:
\begin{eqnarray}
a(\bu_h,\ \bv)-b(\bv,\;p_h)&=&({\bf f},\;\bv_0),\quad\forall\ \bv=\{\bv_0,\; 0\}\in
V_{h,0},\label{wg11}\\
b(\bu_h,\;q)+s_2(p_h,q)&=&-(g,q_0), \quad\forall\ q=\{q_0,\; 0\}\in
W_h,\label{wg22}
\end{eqnarray}
and
\begin{eqnarray}
a(\bu_h,\ \bv)&=&0,\quad\forall\ \bv=\{0,\; \bv_b\}\in
V_{h,0},\label{wg111}\\
b(\bu_h,\;q)+s_2(p_h,q)&=&0, \quad\forall\ q=\{0,\; q_b\}\in
W_h.\label{wg222}
\end{eqnarray}
Denote by $V_k(T)$ and $W_k(T)$ the  restrictions of $V_h$ and $W_h$
on $T$:
\[
V_k(T)=\{ \bv=\{\bv_0, \bv_b=v_1\bt_1+v_2\bt_2\}  :\ \bv_0|_{T}\in
      [P_k(T)]^3,\  v_1, v_2\in  P_k(e),\ e\subset\pT\}.
\]
and
\[
W_k(T)=\{q=\{q_0,q_b\}, q_0\in P_{k-1}(T), q_b\in
P_{k}(e),e\subset\pT\}.
\]
Since the testing functions $\bv=\{\bv_0,\; 0\}$ and $q=\{q_0,\;
0\}$ are locally supported on each element, then the system of
equations (\ref{wg11})-(\ref{wg22}) is equivalent to the following
system of equations defined locally on each element $T$:
\begin{eqnarray}
a(\bu_h,\ \bv)-b(\bv,\;p_h)&=&({\bf f},\;\bv_0),\quad\forall\ \bv=\{\bv_0,\; 0\}\in
V_k(T),\label{wg1-local}\\
b(\bu_h,\;q)+s_2(p_h,q)&=&-(g,q_0), \quad\forall\ q=\{q_0,\; 0\}\in
W_k(T).\label{wg2-local}
\end{eqnarray}
If the exact solution of $\bu_b$ and $p_b$ were known on $\pT$, then
the corresponding $\bu_0$ and $p_0$ can be obtained by solving
(\ref{wg1-local}) and (\ref{wg2-local}) locally on each element.
Therefore, the key is to derive a system of equations that shall
determine $\bu_b$ and $p_b$.

For any given $\bu_b$ and $p_b$ on $\pT$, let us solve
(\ref{wg1-local}) and (\ref{wg2-local}) to obtain $\bu_0$ and $p_0$
on each element $T$. For simplicity, we introduce the following
notation
\begin{eqnarray}
\bu_0:&=& D(\bu_b,p_b,{\bf f},g),\label{EQN:09-21-200}\\
p_0:&=& E(\bu_b, p_b,f,g).\label{EQN:09-21-201}
\end{eqnarray}
Then the solution $\bu_h$ and $p_h$ of (\ref{wg1})-(\ref{wg2}) can
be written as $\bu_h=\{\bu_0,\bu_b\}=\{D(\bu_b,p_b,{\bf
f},g),\bu_b\}$ and $p_h=\{p_0,p_b\}=\{E(\bu_b,p_b,{\bf f},g),p_b\}$.

Let $D_1(\bu_b,p_b)=D(\bu_b,p_b,0,0)$ \ and \ $D_2({\bf
f},g)=D(0,0,{\bf f},g)$. Similarly let $E_1(\bu_b,p_b)  =
E(\bu_b,p_b,0,0)$ and $E_2({\bf f},g)=E(0,0,{\bf f},g)$. Since
$a(\cdot,\cdot)$, $b(\cdot,\cdot)$ and $s_2(\cdot,\cdot)$ are
bilinear, then superposition implies
\begin{equation*}
\begin{split}
(\bu_h; p_h)&=(\{\bu_0,\bu_b\}; \{p_0,p_b\})\\
&=(\{ D(\bu_b,p_b,\bbf,g), \bu_b\}; \{E(\bu_b,p_b,\bbf,g),p_b\})\\
&=(\{ D(\bu_b,p_b,0,0), \bu_b\}; \{E(\bu_b,p_b,0,0),p_b\})\\
&\ \ + (\{ D(0,0,\bbf,g), 0\}; \{E(0,0,\bbf,g),0\})\\
&=(\{ D_1(\bu_b,p_b), \bu_b\}; \{E_1(\bu_b,p_b),p_b\}) + (\{
D_2(\bbf,g), 0\}; \{E_2(\bbf,g),0\}).
\end{split}
\end{equation*}

Substituting $\bu_h=\{D(\bu_b,p_b,{\bf f},g),\bu_b\}$ and
$p_h=\{E(\bu_b,p_b,{\bf f},g),p_b\}$ into the system
(\ref{wg111})-(\ref{wg222}) yields
\begin{eqnarray}
a(\{D_1(\bu_b,p_b),\bu_b\},\ \bv)&=&\zeta_1(\bv),\label{wg1111}\\
b(\{D_1(\bu_b,p_b),\bu_b\},\;q)+s_2(\{E_1(\bu_b,p_b),p_b\},q)&=&\zeta_2(q),
\label{wg2222}
\end{eqnarray}
for all $\bv=\{0,\; \bv_b\}\in V_{h,0}$ and $q=\{0,\; q_b\}\in W_h$.
Here
\begin{eqnarray*}
\zeta_1(\bv)&=&-a(\{D_2({\bf f},g),0\},\bv)\\
\zeta_2(q)&=&-b(\{D_2({\bf f},g),0\},q)-s_2(\{E_2({\bf f},g),0\},q).
\end{eqnarray*}
Note that the system (\ref{wg1111})-(\ref{wg2222}) is a square
matrix problem with $\bu_b$ and $p_b$ as unknowns, and this is the
system of equations that $\bu_b$ and $p_b$ have to satisfy.

To summarize, our WG scheme (\ref{wg1})-(\ref{wg2}) can be
implemented as follows:

\begin{description}
\item[{\bf Step 1.}] Find $\bu_b$ and $p_b$ with
$\bu_b\times\bn=Q_b\phi$ and $p_b=0$ on $\partial\Omega$ satisfying
(\ref{wg1111})-(\ref{wg2222}).
\item[{\bf Step 2.}] Recover $\bu_0$
and $p_0$ by $\bu_0=D(\bu_b,p_b,{\bf f},g)$ and
$p_0=E(\bu_b,p_b,{\bf f},g)$ by solving (\ref{wg1-local}) and
(\ref{wg2-local}) locally on each element.
\end{description}

\smallskip
The system of equations (\ref{wg1111})-(\ref{wg2222}) is known as a
Schur complement of the original WG finite element scheme
(\ref{wg1})-(\ref{wg2}).

\section{Numerical Results}\label{Section:NE}
Our numerical tests are conducted for the Maxwell equations
\eqref{moment1}--\eqref{bc1} on the unit cube $\Omega=(0,1)^3$. The
first level grid consists of one cube. Each grid is refined by
subdividing a cube into eight half-sized cubes, to define the next
level grid. We apply the first order weak Galerkin finite element
method; i.e., $V_h$ and $W_h$ are defined in \eqref{Vh} and
\eqref{Wh} with $k=1$, respectively. Thus, the vector component
$\bu$ is approximated by using piecewise linear functions on each
cube and its faces; the scalar component $p$ is approximated by
using constants on each cube and linear function on its faces.

We compute four sets of solutions of \eqref{moment1}--\eqref{bc1},
which are
\begin{align}
    \label{s1} && \bu&= \begin{pmatrix} y-z \\ z-x \\ 3z-2y \end{pmatrix},
     &  p&=1.  && \\
   \label{s2} && \bu&= \begin{pmatrix} yz \\ zx \\ 3z-2yx \end{pmatrix},
     &  p&=xz.  && \\
   \label{s3} && \bu&= \begin{pmatrix} e^{yz} \\ z/(x+1) \\ e^{xy} \end{pmatrix},
     &  p&=e^{-xyz}. &&\\
\label{s4} && \bu&= \begin{pmatrix} \cos(\pi x)\sin(\pi y)\sin(\pi z) \\
                              \sin(\pi x)\cos(\pi y)\sin(\pi z) \\
                              \sin(\pi x)\sin(\pi y)\cos(\pi z) \end{pmatrix},
     &  p&=\sin(2\pi x)\sin(2\pi y)\sin(2\pi z).  &&
\end{align}


\begin{figure}[htb]\setlength\unitlength{1in}
\centering
    \begin{picture}(3,5.0)(0,0)
  \put(0,1.5){\resizebox{3.5in}{4.5in}{\includegraphics{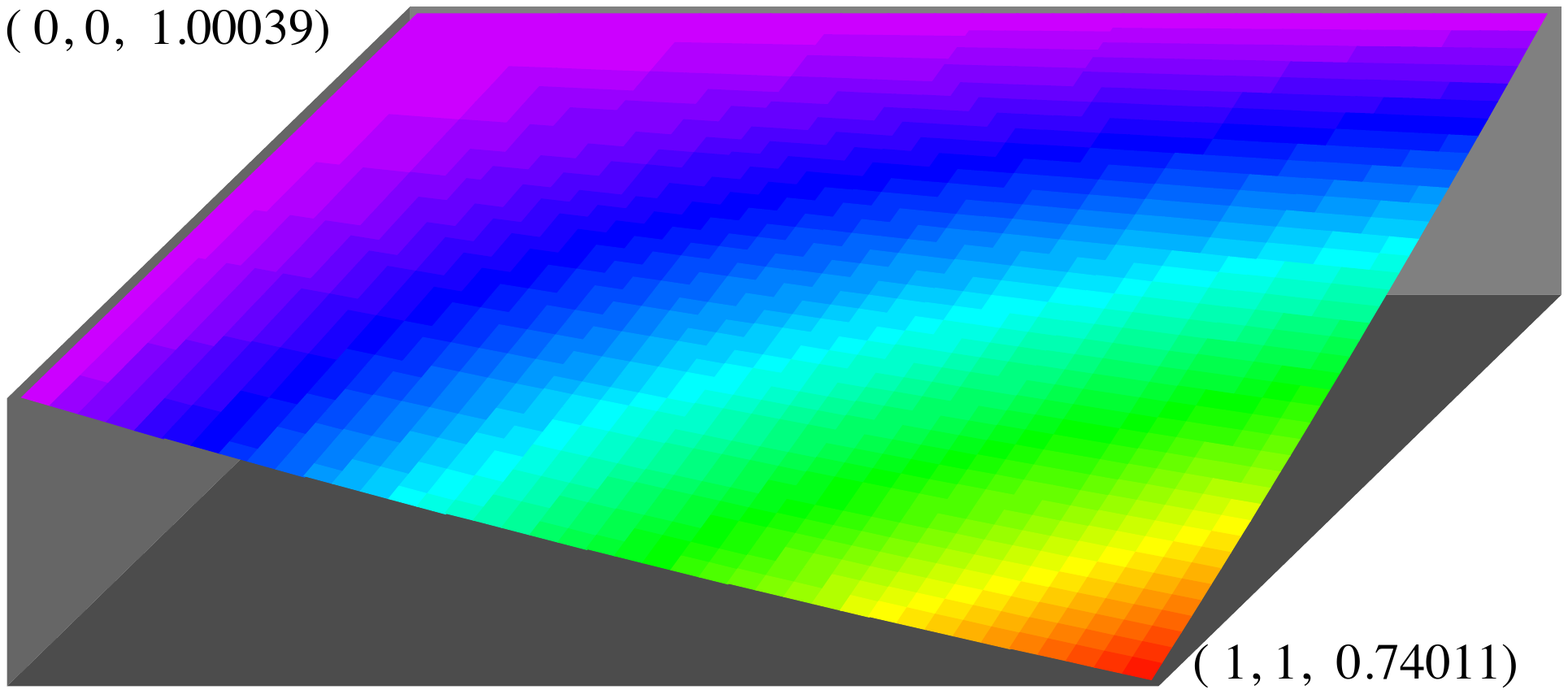}}}
   \put(0,-0.5){\resizebox{3.5in}{4.5in}{\includegraphics{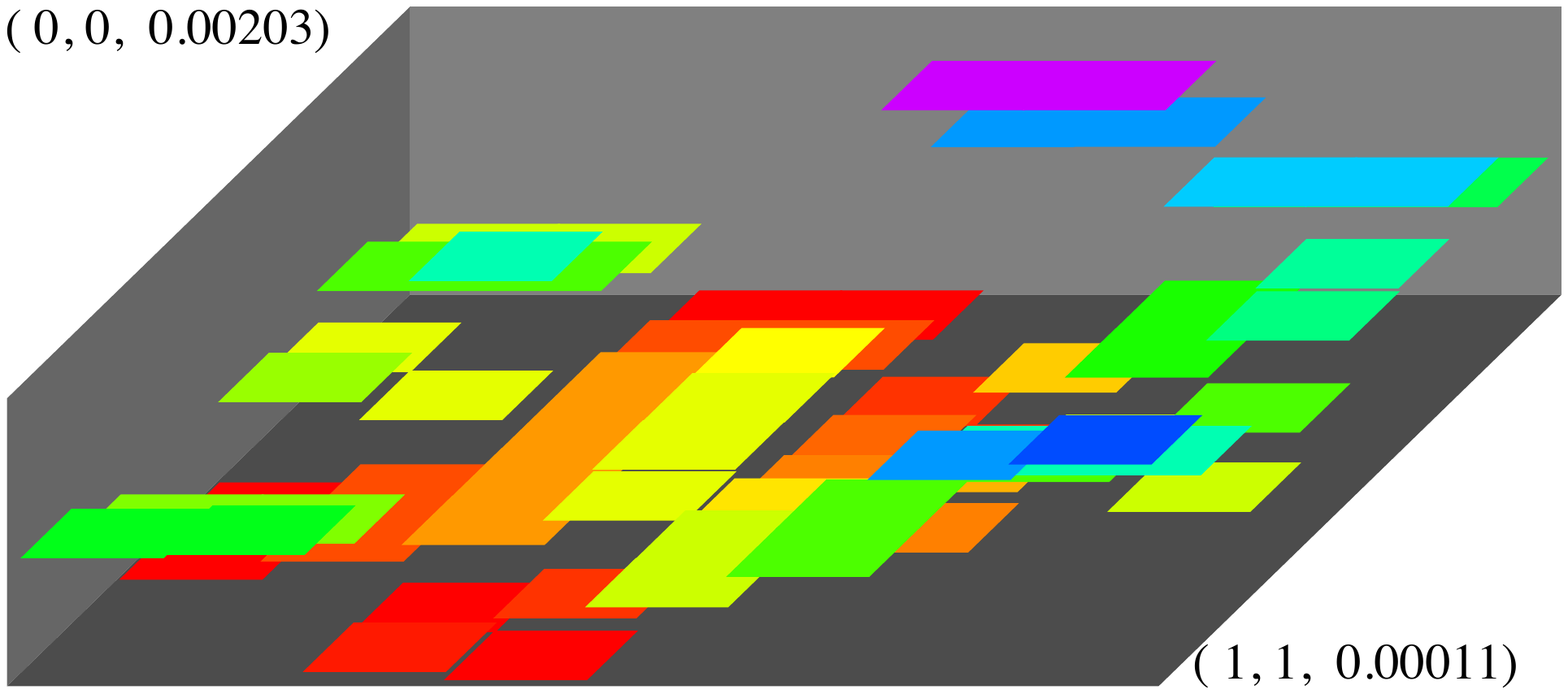}}}
   \put(0,-2.5){\resizebox{3.5in}{4.5in}{\includegraphics{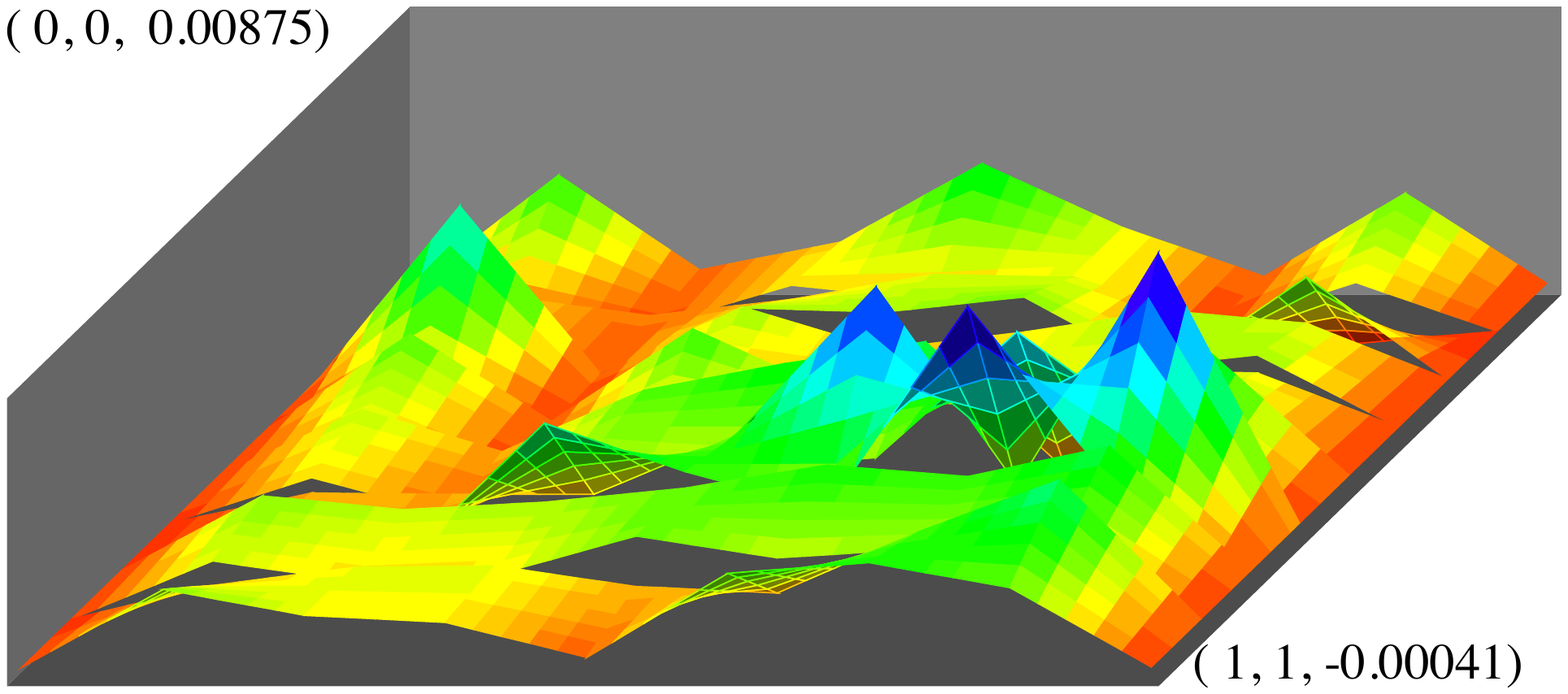}} }
    \end{picture}
    \caption{The solution $p$ in \eqref{s3}, and the errors
$(p-p_0)$ and $(p-p_b)$ on level 4, at $z=0.3$. } \label{f-p}
\end{figure}

Observe that the solution $p$ in the first three test cases does not
satisfy the homogeneous boundary condition (\ref{bc1}). The
corresponding WG scheme (\ref{wg1})-(\ref{wg2}) needs to be modified
so that $p_b$ assumes the given non-homogeneous boundary value;
namely, the $L^2$ projection of the value of $p$ on the boundary.

 \begin{table}[htb]
  \caption{ \label{table1} The errors,  $\be_h=\bQ_h \bu- \bu_h$ in $H^1$-like norm $\3bar\cdot\3bar_1$,
      $\epsilon_h=Q_hp -p_h$ in $L^2$-like norm $\3bar\cdot\3bar_0$,
     $\be_0=\bQ_0 \bu - \bu_0$ in $L^2$ norm,
    and $\epsilon_0=Q_0p-p_0$ in $L^2$ norm,
    for \eqref{s1} by $k=1$ finite elements \eqref{Vh}--\eqref{Wh}.}
\begin{center}  \begin{tabular}{c|rr|rr|rr|rr}  
 \hline grid &  $ \3bar \be_h\3bar_1$&$h^r$ &  $ \3bar\epsilon_h \3bar_0$&$h^r$ &
   $ \|\epsilon_0\|_{L^2}$&$h^r$  & $ \|\be_0\|_{L^2}$&$h^r$ \\ \hline
1&     0.00000&0.0&     0.00000&0.0&   0.00000&0.0&     0.00000&0.0  \\
2&     0.00000&0.0&     0.00000&0.0&    0.00000&0.0&    0.00000&0.0  \\
3&     0.00000&0.0&     0.00000&0.0&    0.00000&0.0&    0.00000&0.0  \\
4&     0.00000&0.0&     0.00000&0.0&    0.00000&0.0&    0.00000&0.0  \\
   \hline
\end{tabular}\end{center} \end{table}

The first numerical test is used to check the correctness of the
code, where the exact solutions \eqref{s1} are linear and constant,
respectively. As expected, the weak Galerkin finite element
solutions are exact, up to computer accuracy. As shown in Table
\ref{table1}, all errors are zero.


\begin{table}[htb]
  \caption{ \label{table2} The errors, $\be_h=\bQ_h \bu- \bu_h$ in $H^1$-like norm
  $\3bar \be_h\3bar_1$,
  $\be_0=\bQ_0 \bu - \bu_0$ in $L^2$ norm $\|\be_0\|$,
      $\epsilon_h=Q_hp -p_h$ in $L^2$-like norm $\3bar\epsilon_h\3bar_0$,
    and $\epsilon_0=Q_0p-p_0$ in $L^2$ norm $\|\epsilon_0\|$,
    for \eqref{s2} by $k=1$ finite elements \eqref{Vh}--\eqref{Wh}.
   And the order $r$ as in $O(h^r)$ of convergence.}
\begin{center}  \begin{tabular}{c|lc|lc|lc|lc}  
\hline grid &  $ \3bar\be_h\3bar_1$&$h^r$ & $ \|\be_0\|_{L^2}$&$h^r$
& $ \3bar\epsilon_h\3bar_0$&$h^r$ &$ \|\epsilon_0\|_{L^2}$&$h^r$
\\ \hline
   1   &2.26e-08   &-     &7.76e-09   &-     &0.0000   &-   &0.0000  &-\\
   2   &5.15e-02   &-     &9.46e-03   &-     &0.0000   &-   &0.0000  &-\\
   3   &2.28e-02   &1.1   &2.14e-03   &2.1   &0.0000   &-   &0.0000  &-\\
   4   &8.77e-03   &1.4   &4.15e-04   &2.4   &0.0000   &-   &0.0000  &-\\
   5   &3.03e-03   &1.5   &7.66e-05   &2.4   &0.0000   &-   &0.0000  &-\\
   \hline
\end{tabular}\end{center} \end{table}

In the second test \eqref{s2}, we choose some bilinear functions as
the exact solution. The numerical results are reported in Table
\ref{table2}. It can be seen that the numerical solution for the
unknown function $p$ is numerically the same as the exact solution.
Moreover, the order of convergence for $\bu$ is half-order higher
than what was proved in the theory. This superconvergence is
probably caused by the special format of the exact solution.

\begin{table}[htb]
  \caption{ \label{table3} The errors,  $\be_h=\bQ_h \bu- \bu_h$ in $H^1$-like norm
  $\3bar \be_h\3bar_1$, $\be_0=\bQ_0 \bu - \bu_0$ in $L^2$ norm $\|\be_0\|$,
      $\epsilon_h=Q_hp -p_h$ in $L^2$-like norm $\3bar\epsilon_h\3bar_0$,
$\3bar\epsilon_h\3bar_{0,h}$, and $\epsilon_0=Q_0p-p_0$ in $L^2$
norm $\|\epsilon_0\|$,
    for \eqref{s3} by $k=1$ finite elements \eqref{Vh}--\eqref{Wh}.
   And the order $r$ as in $O(h^r)$ of convergence.}
\begin{center}  \begin{tabular}{c|lc|lc|lc|lc|lc}  
\hline grid &  $ \3bar \be_h\3bar_1$&$h^r$ & $
\|\be_0\|_{L^2}$&$h^r$ &  $ \3bar\epsilon_h \3bar_0$&$h^r$ & $
\3bar\epsilon_h \3bar_{0,h}$&$h^r$ &$ \|\epsilon_0\|_{L^2}$&$h^r$   \\
\hline
1   &7.02e-1  &-     &3.32e-1   & -    &6.56e-3   &-     &6.56e-3 &-    &2.68e-3  &-\\
2   &3.69e-1  &0.9   &8.71e-2   &1.9   &7.34e-2   &-     &4.73e-3 &0.5  &2.33e-3  &0.2\\
3   &1.91e-1  &0.9   &2.10e-2   &2.1   &5.11e-2   &0.5   &1.09e-3 &2.1  &4.73e-4  &2.3\\
4   &1.02e-1  &0.9   &5.10e-3   &2.0   &2.91e-2   &0.8   &2.67e-4 &2.0  &1.18e-4  &2.0\\
5   &5.05e-2  &1.0   &1.26e-3   &2.0   &1.55e-2   &0.9   &6.59e-5 &2.0  &2.95e-5  &2.0\\
6   &2.52e-2  &1.0   &3.16e-4   &2.0   &7.73e-3   &1.0   &1.65e-5 &2.0  &7.39e-6  &2.0\\
\hline
\end{tabular}\end{center} \end{table}

In the third test \eqref{s3}, the exact solution is chosen as a
general function. The numerical results for this test case is
presented in Table \ref{table3},  confirming the theoretical
convergence estimates as derived in Theorems \ref{WG-ErrorEstimate}
and \ref{L2-bd}.

\begin{figure}[htb]\setlength\unitlength{1in}
\centering
    \begin{picture}(3,6.7)(0,0)
  \put(0,3.5){\resizebox{3.5in}{4.5in}{\includegraphics{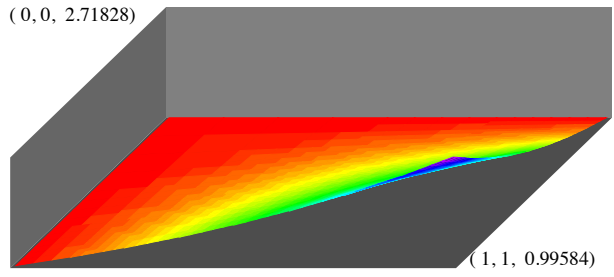}}}
   \put(0, 1.5){\resizebox{3.5in}{4.5in}{\includegraphics{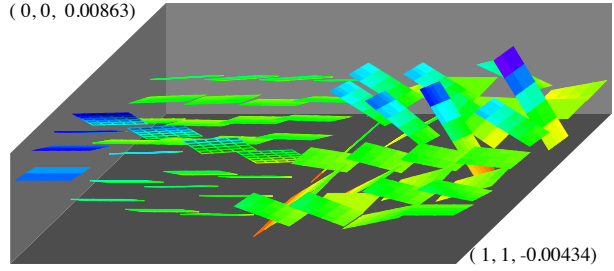}}}
   \put(0,-0.5){\resizebox{3.5in}{4.5in}{\includegraphics{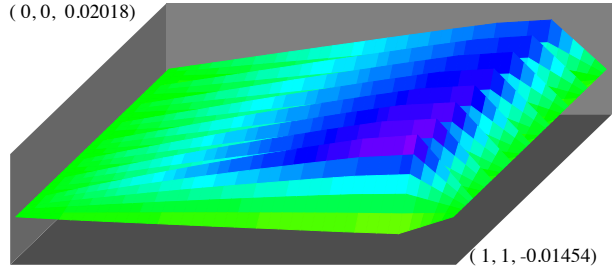}} }
   \put(0,-2.5){\resizebox{3.5in}{4.5in}{\includegraphics{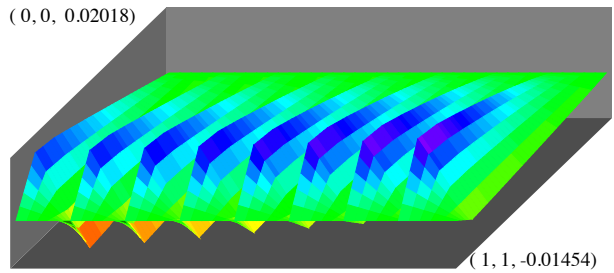}} }
    \end{picture}
\caption{The solution $(\bu)_3$ (the third component) in \eqref{s3},
      and the errors $(\bu-\bu_0)_3$,
    $(\bu-\bu_b)_{3,\bt_1}$ and
    $(\bu-\bu_b)_{3,\bt_2}$ (the two tangential component of the third component)
     on level 4, at $z=0.3$. } \label{f-u}
\end{figure}

Table \ref{table3} contains additional information for the scalar
approximation $p_h$; the fourth column is the error for the scalar
approximation measured at the center of each face in a discrete
$L^2$ fashion. More precisely, for each $q_h=\{q_0, q_b\}\in W_h$,
the semi-norm $\3bar q_h \3bar_{0,h}$ is defined as follows:
$$
\3bar q_h\3bar_{0,h}^2=\sum_{T\in\T_h} h \| q_0 - \Pi
q_b\|_{\partial T}^2,
$$
where $\Pi$ is the average operator on each face. It can be seen
that the convergence in this discrete $L^2$ norm is of order
$O(h^2)$, which is higher than the theoretical prediction.
For this purpose,  we graph the solutions and errors in Figures
  \ref{f-p} and \ref{f-u}.
We believe that some superconvergence is playing a role in the weak
Galerkin finite element method. This is left to interested readers
for an investigation.

The forth test \eqref{s4} was conducted on another solution with
general structure. The goal of this test is to re-confirm the
convergence results developed in earlier Sections. The numerical
results are presented in Table \ref{table4}. The numerical
performance of the weak Galerkin finite element method is similar to
the test case three.

\begin{table}[htb]
  \caption{ \label{table4} The errors,  $\be_h=\bQ_h \bu- \bu_h$ in $H^1$-like norm
  $\3bar \be_h\3bar_1$,
  $\be_0=\bQ_0 \bu - \bu_0$ in $L^2$ norm $\|\be_0\|$,
      $\epsilon_h=Q_hp -p_h$ in $L^2$-like norm $\3bar\epsilon_h\3bar_0$,
$\3bar\epsilon_h\3bar_{0,h}$,
    and $\epsilon_0=Q_0p-p_0$ in $L^2$ norm $\|\epsilon_0\|$,
    for \eqref{s4} by $k=1$ finite elements \eqref{Vh}--\eqref{Wh}.
   And the order $r$ as in $O(h^r)$ of convergence.}
\begin{center}  \begin{tabular}{c|lc|lc|lc|lc|lc}  
\hline grid &  $ \3bar \be_h\3bar_1$&$h^r$ & $
\|\be_0\|_{L^2}$&$h^r$ &  $ \3bar\epsilon_h \3bar_0$&$h^r$ & $
\3bar\epsilon_h \3bar_{0,h}$&$h^r$ &$ \|\epsilon_0\|_{L^2}$&$h^r$   \\
\hline
1   &8.54e0    & -    &1.35e0    &-     &3.60e-1  &-    &3.60e-1  &-    &1.47e-1   &-\\
2   &2.27e0    &1.8   &4.77e-1   &1.5   &2.10e0   &-    &2.08e0   &-    &8.65e-1   &-\\
3   &9.86e-1   &1.2   &1.47e-1   &1.7   &5.17e-1  &2.0  &2.50e-1  &3.1  &1.78e-1   &2.3\\
4   &4.32e-1   &1.1   &3.86e-2   &1.9   &3.09e-1  &0.8  &4.53e-2  &2.5  &4.10e-2   &2.1\\
5   &1.97e-1   &1.1   &9.21e-3   &2.0   &1.71e-1  &0.9  &1.08e-2  &2.0  &1.06e-2   &1.9\\
6   &9.85e-2   &1.0   &2.26e-3   &2.0   &8.75e-2  &1.0  &2.69e-3  &2.0  &2.71e-3   &2.0\\
\hline
\end{tabular}\end{center} \end{table}

In the rest of our numerical experiments, we considered a version of
the finite element scheme (\ref{wg1})-(\ref{wg2}) for which no
convergence theory was developed in the present paper. More
precisely, the WG method makes use of piecewise linear functions for
the vector component $V_h$, but the scalar component is modified as
follows:
$$
W_h=\{w=\{w_0,w_b\}:\{w_0,w_b\}|_T\in P_0(T)\times
P_0(e),e\subset\partial T,w_b=0 \mbox{ on }\partial\Omega\}.
$$
In other words, the scalar variable is approximated by using
piecewise constants on both the interior and the boundary of each
element. Again, it is not known if the current theoretical result
can be extended to this simple WG element, though the numerical
results show an excellent approximation to the exact solution. Table
\ref{table5} contains the numerical results for the test case
\eqref{s3}, and Table \ref{table6} is for the test case \eqref{s4}.

 \begin{table}[htb!]
  \caption{The errors,  $\be_h=\bQ_h \bu- \bu_h$ in $H^1$-like norm,
        $\be_0=\bQ_0 \bu - \bu_0$ in $L^2$ norm,
     $\epsilon_h=Q_hp -p_h$ in $L^2$-like norm,
    and $\epsilon_0=Q_0p-p_0$ in $L^2$ norm,
    for \eqref{s4} by lower order WG finite elements.
   And the order $r$ as in $O(h^r)$ of convergence.}\label{table5}
\begin{center}  \begin{tabular}{c|lc|lc|lc|lc}  
\hline grid &  $ \3bar \be_h\3bar_1$&$h^r$ & $
\|\be_0\|_{L^2}$&$h^r$ &  $ \3bar\epsilon_h \3bar_0$&$h^r$ &
   $ \|\epsilon_0\|_{L^2}$&$h^r$   \\ \hline

1   &6.72e-1  &-     &3.24e-1   &-     &0.00e0    &-      &2.68e-3  &-\\
2   &3.66e-1  &0.9   &8.62e-2   &1.9   &5.73e-3   &-      &2.92e-3  &-\\
3   &1.94e-1  &0.9   &2.09e-2   &2.0   &1.03e-3   &2.5   &6.16e-4  & 2.2\\
4   &1.02e-2  &0.9   &5.07e-3   &2.0   &1.89e-4   &2.5   &1.12e-4  & 2.4\\
5   &5.05e-2  &1.0   &1.26e-3   &2.0   &9.44e-5   &2.0    &2.71e-5  & 2.1 \\
6   &2.52e-2  &1.0   &3.14e-4   &2.0   &4.72e-5   &2.0    &6.78e-6  & 2.0\\
   \hline
\end{tabular}\end{center} \end{table}

\begin{table}[htb!]
  \caption{The errors,  $\be_h=\bQ_h \bu- \bu_h$ in $H^1$-like norm,
        $\be_0=\bQ_0 \bu - \bu_0$ in $L^2$ norm,
     $\epsilon_h=Q_hp -p_h$ in $L^2$-like norm,
    and $\epsilon_0=Q_0p-p_0$ in $L^2$ norm,
    for \eqref{s4} by lower order WG finite elements.
   And the order $r$ as in $O(h^r)$ of convergence.}\label{table6}
\begin{center}  \begin{tabular}{c|lc|lc|lc|lc}  
\hline grid &  $ \3bar \be_h\3bar_1$&$h^r$ & $
\|\be_0\|_{L^2}$&$h^r$ &  $ \3bar\epsilon_h \3bar_0$&$h^r$ &
   $ \|\epsilon_0\|_{L^2}$&$h^r$   \\ \hline

 1& 8.54e0  &-   & 1.35e0  &-   & 3.60e-1 &-   & 1.47e-1&- \\
 2& 2.21e0  &1.9 & 4.27e-1 &1.7 & 2.08e0  &-   & 8.57e-1&- \\
 3& 1.07e0  &1.1 & 1.63e-1 &1.4 & 1.99e-1 &3.4 & 1.23e-1&2.8  \\
 4& 4.35e-1 &1.2 & 3.88e-2 &2.1 & 4.49e-2 &2.1 & 2.71e-2&2.2 \\
 5& 1.96e-2 &1.1 & 9.19e-3 &2.1 & 1.07e-2 &2.1 & 7.23e-3&1.9 \\
 6& 9.82e-2 &1.0 & 2.26e-3 &2.0 & 2.61e-3 &2.0 & 1.85e-3&2.0 \\
   \hline
\end{tabular}\end{center} \end{table}

\newpage

\end{document}